\newcommand{\one}[1]{\mbox {\bf 1}_{\{#1\}}}
\newcommand{\odin}{\mbox {\bf 1}}
\newcommand{\witi}{\widetilde}
\newcommand{\zz}{{\mathbb Z}}
\newcommand{\rr}{{\mathbb R}}
\newcommand{\cc}{{\mathbb C}}
\newcommand{\calj}{{\mathcal J}}
\newcommand{\cals}{{\mathcal S}}
\newcommand{\cali}{{\mathcal I}}
\newcommand{\caln}{{\mathcal N}}
\newcommand{\call}{{\mathcal L}}
\newcommand{\veps}{\varepsilon}
\newcommand{\beq}{\begin{eqnarray*}}
\newcommand{\feq}{\end{eqnarray*}}
\newcommand{\beqn}{\begin{eqnarray}}
\newcommand{\feqn}{\end{eqnarray}}
\newtheorem{theorem}{Theorem}
\newtheorem*{conj*}{Conjecture}
\makeatletter \@addtoreset{theorem}{section}\makeatother
\newcommand{\nn}{{\mathbb N}}
\makeatletter \@addtoreset{theorem}{section}\makeatother
\newtheorem{lemma}[theorem]{Lemma}
\newtheorem*{theorema*}{Theorem~A}
\newtheorem*{theoremb*}{Theorem~B}
\newtheorem*{cld*}{Condition $\mbox{LD}_d$}
\newtheorem*{theorem*}{Theorem}
\newtheorem{proposition}[theorem]{Proposition}
\newtheorem{corollary}[theorem]{Corollary}
\newtheorem{remark}[theorem]{Remark}
\newtheorem*{remark*}{Remark}
\newtheorem{example}[theorem]{Example}
\def\BState{\State\hskip-\ALG@thistlm}
\newlength\myindent
\title{Staircase patterns in words: subsequences,
\\
subwords, and separation number}
\author{Toufik~Mansour\thanks{ Department of Mathematics, University of Haifa, 199 Abba Khoushy Ave, 3498838 Haifa, Israel;
\newline e-mail: tmansour@univ.haifa.ac.il}
\and
Reza~Rastegar\thanks{Occidental Petroleum Corporation, Houston, TX 77046 and Departments of Mathematics
and Engineering, University of Tulsa, OK 74104, USA - Adjunct Professor; e-mail:  reza\_rastegar2@oxy.com}
\and
Alexander~Roitershtein \thanks{Department of Statistics, Texas A\&M University, College Station, TX 77843, USA;
\newline e-mail: alexander@stat.tamu.edu}
}
\date{July 27, 2019}
\begin{document}
\maketitle
\begin{abstract}
We revisit staircases for words and prove several exact as well as asymptotic results for longest left-most staircase subsequences and subwords and
staircase separation number, the latter being defined as the number of consecutive maximal staircase subwords packed in a word. We study asymptotic properties of the sequence $h_{r,k}(n),$ the number of $n$-array words with $r$ separations over alphabet $[k]$ and show that for any $r\geq 0,$ the growth sequence $\big(h_{r,k}(n)\big)^{1/n}$ converges to a characterized limit, independent of $r.$ In addition, we study the asymptotic behavior of the random variable $\cals_k(n),$ the number of staircase separations in a random word in $[k]^n$ and obtain several limit theorems for the distribution of $\cals_k(n),$ including a law of large numbers, a central limit theorem, and the exact growth rate of the entropy of $\cals_k(n).$ Finally, we obtain similar results, including  growth limits, for longest $L$-staircase subwords and subsequences.
\end{abstract}
{\em MSC2010: } Primary~05A15, 05A16, 68R15; Secondary~60C05, 60J10, 60J22.\\
\noindent{\em Keywords}: $k$-ary words, pattern occurrences, staircase patterns, generating functions, random words, Markov chains.	
\section{Introduction and statement of results}
\label{intro}
Analysis of patterns in random words is one of the central topics in computer science, statistics, and combinatorics, with various important applications in biology, technology, and physics \cite{Bbook, analcombin, HM, Kbook}. In this paper, we are primarily concerned with the properties of a specific class of patterns called \textit{staircase} or \textit{smooth} patterns. Staircase patterns and related topics, such as $(a,b)$-rectangle patterns in permutations, have been the subject of several studies in recent years \cite{kitaev, mansour3,mansour2,mansour1}. Our initial motivation for studying these patterns stems out of their relation with a certain growth model of statistical physics \cite{depot}.
\par
We define \textit{words} as finite arrays of elements (\textit{letters}) from a linearly ordered set (\textit{alphabet})
$[k]:=\{1,\ldots,k\}$ for some given $k\in\nn.$ We refer to (not necessarily distinct) letters $i$ and $j$ in $[k]$ as \textit{neighbors} if $|i-j|\leq 1.$ Note that in the linear setup, in contrast to the cyclic one, the letters $1$ and $k$ are not considered to be neighbors. Let $\pi=\pi_1\pi_2\cdots \pi_n \in [k]^n$ be a $k$-ary word of the length $n.$ We say that $\pi$ is a \textit{staircase} if $\pi_{i+1}-\pi_i\in\{-1,0,1\}$ for all $i=1,2,\ldots,n-1$ \cite{mansour2} (such words are referred to as \textit{smooth} in \cite{mansour1,mansour3}). In other words, $\pi$ is a staircase if any pair of adjacent letters in the word are nearest neighbors in the
linear alphabet $[k].$ Staircase words can be interpreted as a certain class of Motzkin paths with steps $(1,1),$ $(1,-1)$ and $(1,0),$ laying in a strip with vertex heights bounded within the interval from $0$ to $k-1$ \cite{mansour2}.
\par
Our first object of interest is the \textit{staircase separation number}. For an integer $d\geq 1,$ we call a word $\pi$ \textit{$d$-separated} if it can be can be written as $\pi^{(1)}\pi^{(2)}\cdots\pi^{(d+1)}$ such that $\pi^{(i)}$s are maximal staircase subwords in $\pi,$ meaning while $\pi^{(i)}$ is a staircase, $\pi^{(i)}$ followed by the first letter in $\pi^{(i+1)}$ is not a staircase subword. Staircase words are considered $0$-separated. For a $d$-separated word $\pi\in [k]^n,$ we refer to $d$ as the staircase separation number of $\pi$ and denote it by $\cals_k(\pi).$  For instance, the word $12313242321$ is a concatenation of $5$ consecutive maximal staircase subwords $123, 1, 32, 4, 2321,$ and hence $\cals_k(12313242321) = 4.$ The frequency sequence $h_{r,k}(n)$ is then defined as the number of words $\pi$ in $[k]^n$ with $\cals_k(\pi)=r,$ that is
\beq
h_{r,k}(n) = \#\{ \pi \in [k]^n \,:\, \cals_k(\pi) = r \}.
\feq	
We denote by $\cals_k(n)$ the staircase separation number of a random word $\pi$ chosen uniformly from $[k]^n.$ The distribution of the random variable $\cals_k(n)$ is related to the sequences $h_{r,k}(n)$ through the identities
\beq
P^{(s)}_{k,n}(r):= P(\cals_k(n)=r) =\frac{1}{k^n}h_{r,k}(n),
\feq
where $P$ is a probability law that induces the uniform distribution on $[k]^n.$ It will be
technically convenient to define $P$ on the union $\cup_n [k]^n$ rather than on a single $[k]^n$ with a specific $n,$
and thus to have all random variables under consideration defined in the same probability space. Therefore,
we consider $P$ as the probability law of a given sequence of independent random variables $(w_n)_{n\in\nn},$ each distributed uniformly over the alphabet set $[k],$ and refer to $W_n=w_1\cdots w_n$ as a \textit{random word on $[k]^n.$} We record this for future reference:
\beqn
\label{random}
w_n, n\in\nn,~\mbox{\rm are independent},\quad P(w_i=w)=\frac{1}{k}~\forall\,w\in [k],\qquad W_n=w_1\cdots w_n.
\feqn
For example, in this notation, $\cals_k(n)=\cals_k(W_n).$
\par
Another object of interest is the longest $L$-staircase pattern, which we study in both subsequence and subword contexts. We say that $\pi'=\pi_{i_1}\pi_{i_2}\cdots\pi_{i_r}$ is the \textit{longest $L$-staircase} or \textit{left-most staircase} subsequence of $\pi$ if $\pi'$ is a staircase word such that $i_1=1,$ the differences $i_2-i_1,\ldots,i_r-i_{r-1}$ are all minimal, and $r$ is maximal under this rule. For a fixed $\pi\in [k]^n,$ we denote this maximal $r$ by $\calj_k(\pi).$ That is the sequence $(i_j)_{j\in\infty}$ is defined by setting $i_1=1$ and then, recursively,
\beq
i_j=\inf\big\{m>i_{j-1}:|\pi_m-\pi_{i_{j-1}}|\leq 1\big\},\qquad j\in\nn,
\feq
where, as usual, we assume that $\inf\emptyset=+\infty.$ Formally,
\beq
\calj_k(\pi) =\max\{r\in\nn:j_r<\infty\}.
\feq	
For instance, the $4$-ary word $141321$ is not a $L$-staircase word, however, its longest $L$-staircase subsequence is $1121$ with $\calj_4(141321) = 4.$
The frequency sequence $f_{r,k}(n)$ is then defined as the number of words $\pi$ in $[k]^n$ with $\calj_k(\pi)=r,$ that is
\beq
f_{r,k}(n) = \#\big\{ \pi \in [k]^n \,:\, \calj_k(\pi) = r \big\}.
\feq	
Furthermore, we define $\calj_k(n)$ as the length of the longest $L$-staircase subsequence of a random word distributed uniformly over $[k]^n.$ That is,
\beq
P^{ls}_{k,n}(r):= P(\calj_k(n)=r) =\frac{1}{k^n}f_{r,k}(n),
\feq
where $P$ is the uniform distribution on $[k]^n.$
\par
Similarly, we define the $L$-staircase subword or left-most staircase subword in a word $\pi\in [k]^n$ as $\pi'=\pi_1\pi_2\cdots\pi_r,$ where $\pi'$ is a staircase word and $r\leq n$ is maximal. For instance, the $4$-ary word $112141321$ is not a staircase word, its $L$-staircase subword is $1121.$
For $\pi\in[k]^n$ we set $\pi_0=\pi_1$ and define $\cali_k(\pi)$ as
\beq
\cali_k(\pi) = \max \big\{r\in\nn \,:\, r\leq n~\mbox{\rm and}~|\pi_i-\pi_{i-1}|\leq 1\, ~\forall\,i \leq r-1 \big\}.
\feq
We then define $g_{r,k}(n)$ as the number of words $\pi$ in $[k]^n$ such that $\cali_k(\pi)=r.$	That is,
\beq
g_{r,k}(n) = \#\big\{ \pi \in [k]^n \,:\, \cali_k(\pi) = r \big\}.
\feq	
In addition, we define $\cali_k(n)$ as the length of the longest $L$-staircase subword of a random word sampled uniformly from $[k]^n.$ Note that for all $r\in\nn_0,$
\beqn\label{Qnr}
P^{lw}_{k,n}(r):=P(\cali_k(n)=r)=\frac{1}{k^n}g_{r,k}(n).
\feqn
Given that $\cals_1(\pi)=\cals_2(\pi)=0$ and $\calj_1(\pi)=\calj_2(\pi)=\cali_1(\pi)=\cali_2(\pi)=n$ for all $\pi\in[k]^n$ and $n\in\nn,$
in what follows we assume that $k\geq 3.$
\par 	
To state our results we first introduce a few notations. Throughout this paper, $a_n\sim b_n,$ $a_n=O(b_n),$ and $a_n=o(b_n)$ for sequences $a_n$ and $b_n$ with elements that might depend on $r,k,$ and other parameters, means that, respectively, $\lim_{n\to\infty}\frac{a_n}{b_n}=1,$ $\limsup_{n\to\infty}\big|\frac{a_n}{b_n}\big|<\infty,$ and $\lim_{n\to\infty}\frac{a_n}{b_n}=0$ for all feasible values of the parameters when the latter are fixed. As usual, $a_n=\Theta(b_n)$ indicates that both $a_n=O(b_n)$ and $b_n=O(a_n)$ hold true. For a random variable $X,$ we denote its probability distribution by $\call(x),$ and its mean and variance by, respectively, $E(X)$ and $\sigma^2(X).$ We use the notation $\lim_{n\to\infty} \call(X_n)=X$ to indicate the convergence in distribution of a sequence of random variables $X_n$ to a random variable $X,$ as $n$ tends to infinity. We denote by $\caln(0,1)$ a standard normal random variable.
\par
Recall that the sequence of Chebyshev polynomials $U_n(t)$ of the second kind can be defined as the unique solution to the recursion equation
\beqn
\label{rc}
U_n(t)=2tU_{n-1}(t)-U_{n-2}(t), \quad n \geq 2,
\feqn
with initial conditions
\beqn
\label{rc1}
U_0(t)=1 \qquad \mbox{\rm and} \qquad U_1(t)=2t.
\feqn
The polynomials $U_n$ can be explicitly expressed as
\beqn
\label{expl}
U_n(t)=\frac{\big(t+\sqrt{t^2-1}\big)^{n+1}-\big(t-\sqrt{t^2-1}\big)^{n+1}}{2\sqrt{t^2-1}}.
\feqn
A standard reference for basic properties of Chebyshev's polynomial, which are frequently invoked in this paper, is \cite{chebyr}.
Notice the resemblance of \eqref{rc} to the Fibonacci recurrence and of \eqref{expl} to Binet's formula for Fibonacci numbers.
The intimate relation between Chebyshev polynomials and Fibonacci, Lucas, and more generally Horadam's numbers (i.\,e.,
solutions to a general second-order linear recursion with constant coefficients) has been intensively explored, among other areas, 
in enumerative combinatorics for computing generating functions (see, for instance, \cite{HM, mansour1, mansour3}),
number theory for representation of integer sequences (see, for instance, an early work \cite{recur} and recent \cite{aaa,sinverse1}),
and applied linear algebra for computing determinants and inverse matrices (see, for instance, \cite{ejc1,isumms,ejc,tan}).
The formula \eqref{rc} also provides a natural link between the theory of Chebyshev polynomials and
a discrete Poisson equation and its Sturm-Liouville theory \cite{ejc,ejc1,ejc5,ejc3,inverse3}, and subsequently to the harmonic functions
and first passage time of random walks (see, for instance, the classical \cite{kmg,spectral} and more recent \cite{dette,rwp,hargem}). 
Altogether, the link fundamentally bridges between analytic and probabilistic methods exploited in this paper.
\par 	
Our first result deals with the first and second moments of $\cals_k(n),$ $\calj_k(n)$ and $\cali_k(n).$
We remark that in the context of a ``cyclic order" alphabet, namely when $1$ and $k$ are considered as a neighbor pair, the above quantities are relatively easy to analyze. For instance, in the cyclic setup $\cali_k(n)$ becomes equal to a truncated geometric variable $\min\{G_k,n\},$
where $G_k$ is a  geometric random variable with parameter $\frac{k-3}{k},$ $\cals_k(n)$ is binomial $BIN\big(n-1,\frac{k-3}{k}\big),$ and
$\calj_k(n)$ is distributed as $1+BIN\big(n-1,\frac{3}{k}\big).$
The challenge in linear order alphabet $[k]$ is due to the fact that the letters $1$ and $k$
play a special role (in that they both have only two neighbors in the alphabet), and subsequently must be accounted with an extra care.
In the proof of the following theorem, even though asymptotic, as $n\to\infty,$ results for the moments can still be obtained using short probabilistic arguments, we choose an analytical approach, namely the analysis of generating functions, as a unified method of our analysis
in order to compute exact, that is for any given $n\in\nn,$ values of the moments.
\begin{theorem}
\label{Lstair-avg}
For any $k\geq 3,$
\begin{itemize}
\item [(i)] $E\big(\cals_k(n)\big) =\frac{(k-2)(k-1)}{k^2}(n-1),$
\item [(ii)] $\sigma^2\big(\cals_k(n)\big)= -\frac{4(k-2)}{k^4}+\frac{(k-2)(3k^2-5k+6)}{k^4}(n-1).$
\item [(iii)] $E\big(\cali_k(n)\big)  \sim\frac{2}{(k-3)^2U_k\big(\frac{k-1}{2}\big)}
+\frac{2U_{k-1}\big(\frac{k-1}{2}\big)}{(k-3)^2
U_k\big(\frac{k-1}{2}\big)} +\frac{(k^2-3k-2)}{(k-3)^2},$
provided that $k\geq 4.$ Furthermore, $E\big(\cali_3(n)\big)=5+o(1),$ as $n\to\infty.$
\item [(iv)] Provided that $k\geq 4,$ we have $\sigma^2\big(\cali_k(n)\big)=e_1(k)+e_2(k)-e_1^2(k)+o(1),$ where the constants $e_1(k)$ and $e_2(k)$
are explicitly defined below, in \eqref{e1} and \eqref{e2}, respectively. Furthermore, $\sigma^2\big(\cali_3(n)\big)=21+o(1),$ as $n\to\infty.$
\item [(v)] $E\big(\calj_k(n)\big)= 1+\frac{3k-2}{k^2}(n-1).$
\item [(vi)] $\sigma^2\big(\calj_k(n)\big)=\Big(a_k-\frac{(3k-2)(k^2-15k+10)}{k^4}\Big)n+O(1),$ where $a_k$ is defined by \eqref{a_n_lab}.
\end{itemize}
\end{theorem}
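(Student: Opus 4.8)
The plan is to encode each of the three statistics by a transfer matrix on the alphabet $[k]$, pass to a bivariate generating function, and read off the first two moments by differentiating at the marking variable $q=1$; the banded (tridiagonal-type) shape of these matrices is exactly what forces the Chebyshev polynomials $U_m$ into the answers, evaluated at $t=\tfrac{k-1}{2}$. Write $A$ for the $k\times k$ matrix with $A_{ij}=1$ iff $|i-j|\le1$ (so $A=I+T_k$ with $T_k$ the path adjacency matrix), $J=\mathbf{1}\mathbf{1}^{T}$, $B=J-A$, $\deg(v)$ for the number of neighbours of $v$ (equal to $3$ on $\{2,\dots,k-1\}$ and $2$ on $\{1,k\}$), and $N_v=k-\deg(v)$. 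For (i)--(ii): $\cals_k(\pi)$ is the number of indices $i\le n-1$ with $|\pi_{i+1}-\pi_i|\ge2$, so $\sum_{\pi\in[k]^{n}}q^{\cals_k(\pi)}=\mathbf{1}^{T}(A+qB)^{n-1}\mathbf{1}$; as $A+B=J$ and every positive power of $J$ is a scalar multiple of $J$, one or two differentiations at $q=1$ collapse everything to $\mathbf{1}^{T}B\mathbf{1}=(k-1)(k-2)$ and $\mathbf{1}^{T}B^{2}\mathbf{1}=\sum_v N_v^{2}=2(k-2)^{2}+(k-2)(k-3)^{2}$, and dividing by $k^{n}$ and using $\sigma^{2}=E\bigl(\cals_k(n)(\cals_k(n)-1)\bigr)+E(\cals_k(n))-E(\cals_k(n))^{2}$ yields (i) and (ii). (Equivalently $\bigl(\one{|w_{i+1}-w_i|\ge2}\bigr)_i$ is a stationary $1$-dependent Bernoulli sequence with success probability $\tfrac{(k-1)(k-2)}{k^{2}}$ and lag-one covariance $\tfrac{2(k-2)}{k^{4}}$, which reproduces both formulas directly.)

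For (iii)--(iv) note $\cali_k(n)=\min(n,\tau)$ with $\tau$ the first $i$ for which $|w_{i+1}-w_i|\ge2$, so $P(\cali_k(n)\ge r)=\sigma_k(r)/k^{r}$, where $\sigma_k(r)=\mathbf{1}^{T}A^{r-1}\mathbf{1}$ is the number of staircase words of length $r$. Summing tails, $E(\cali_k(n))=\sum_{r=1}^{n}\sigma_k(r)/k^{r}$ and $E(\cali_k(n)^{2})=\sum_{r=1}^{n}(2r-1)\sigma_k(r)/k^{r}$; since the eigenvalues $1+2\cos\frac{\pi j}{k+1}$ of $A$ are all $<k$ for $k\ge3$, both series converge geometrically, and
\[
\lim_{n\to\infty}E(\cali_k(n))=\mathbf{1}^{T}\bigl((k-1)I-T_k\bigr)^{-1}\mathbf{1},\qquad \lim_{n\to\infty}E\bigl(\cali_k(n)^{2}\bigr)=2k\,\mathbf{1}^{T}(kI-A)^{-2}\mathbf{1}-\lim_{n\to\infty}E(\cali_k(n)).
\]
Inserting the classical identity $\bigl[(xI-T_k)^{-1}\bigr]_{ij}=U_{i-1}(x/2)\,U_{k-j}(x/2)/U_k(x/2)$ for $i\le j$ at $x=k-1$, summing over $i,j$, and using the partial-sum identity $\sum_{m=0}^{N}U_m(t)=\bigl(1-U_{N+1}(t)+U_N(t)\bigr)/(2-2t)$, in which $2-2t=3-k$, produces the $(k-3)^{2}$ in the denominators and, after standard product relations for the $U_m$, the expression in (iii); squaring the resolvent gives the constants $e_1(k),e_2(k)$ of \eqref{e1}--\eqref{e2} and hence (iv). For $k=3$ it is fastest to compute directly: $(3I-A)^{-1}=\tfrac14\left(\begin{smallmatrix}3&2&1\\2&4&2\\1&2&3\end{smallmatrix}\right)$ has entry-sum $5$ and $(3I-A)^{-2}$ has entry-sum $\tfrac{136}{16}$, so $\lim_nE(\cali_3(n))=5$ and $\lim_nE(\cali_3(n)^{2})=46$, i.e. variance $21$.

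For (v)--(vi), the greedy left-most subsequence is governed by the value process $V_m$, the value at the most recently selected position $\le m$, with $V_2=w_1$ and $V_{m+1}=w_m$ when $|w_m-V_m|\le1$ (position $m$ is then selected), $V_{m+1}=V_m$ otherwise. Then $(V_m)_{m\ge2}$ is a Markov chain whose transition matrix $P$ is symmetric tridiagonal with off-diagonals $\tfrac1k$ and diagonal $\tfrac{N_v+1}{k}$, for which the uniform law is stationary; as $V_2$ is uniform the chain is stationary, so $P(V_m=v)=\tfrac1k$ for every $m\ge2$. With $\calj_k(n)=1+\sum_{m=2}^{n}X_m$, $X_m=\one{|w_m-V_m|\le1}$, and $E[X_m\mid V_m]=\deg(V_m)/k$, one gets $P(X_m=1)=\tfrac1{k^{2}}\sum_v\deg(v)=\tfrac{3k-2}{k^{2}}$, which is (v) exactly. (The same constant also drops out of the generating function $q\,x\,\mathbf{1}^{T}\bigl(I-qQ(x)\bigr)^{-1}\mathbf{t}(x)$, obtained by decomposing a word into its first letter, then blocks ``(run of non-neighbours of the current value)(one neighbour)'', then a terminal run of non-neighbours, with $Q(x)_{v,v'}=\tfrac{x}{1-N_v x}\one{|v-v'|\le1}$ and $\mathbf{t}(x)_v=\tfrac1{1-N_v x}$.) For (vi), $\sigma^{2}(\calj_k(n))=\operatorname{Var}\bigl(\sum_{m=2}^{n}X_m\bigr)$, and because $(X_m)$ is a bounded functional of the stationary chain $(V_m)$, which has a spectral gap, this equals $n\Gamma_k+O(1)$ with $\Gamma_k=\operatorname{Var}(X_2)+2\sum_{d\ge1}\operatorname{Cov}(X_2,X_{2+d})$; the identity $E[X_2X_{2+d}]=k^{-3}\langle\mathbf{g},P^{d-1}\mathbf{g}\rangle$ with $\mathbf{g}_v=\deg(v)$ turns $\sum_{d\ge1}$ into a resolvent $\langle\mathbf{h},(I-P)^{-1}\mathbf{h}\rangle$ of the tridiagonal $P$ on the orthocomplement of the constants ($\mathbf{h}$ being $\mathbf{g}$ with its mean subtracted), which the Chebyshev polynomials evaluate to the constant $a_k$ of \eqref{a_n_lab}, and collecting terms gives (vi).

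The main obstacle is not the probabilistic bookkeeping but the exact evaluation of the bilinear forms $\mathbf{1}^{T}\bigl((k-1)I-T_k\bigr)^{-m}\mathbf{1}$ and the analogous $P$-resolvent form in closed Chebyshev form, with careful treatment of the two boundary letters $1$ and $k$, whose degree deficit is precisely what makes the numerators in (iii)--(vi) awkward, and then simplifying the resulting sums via contiguous and product relations for $U_m$ specialised to $t=\tfrac{k-1}{2}$. Part (vi) is the most demanding, since it additionally requires showing that the covariance series is summable with the stated rate and isolating the $O(1)$ remainder, not merely extracting the leading coefficient.
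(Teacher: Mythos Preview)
Your outline is correct and complete in substance, but it is methodologically different from the paper's proof on almost every part. The paper proceeds uniformly via bivariate generating functions: for each of $\cals_k$, $\cali_k$, $\calj_k$ it sets up $L_k(x,q)=\sum_{n}\sum_\pi q^{\text{stat}(\pi)}x^n$, derives a tridiagonal linear system for the letter-refined series $L_{k;i}(x,q)$, inverts it with the Chebyshev formula for tridiagonal inverses, and then reads off moments by differentiating in $q$ at $q=1$ and extracting the coefficient of $x^n$ (by residues at the pole $x=1/k$ when only asymptotics are needed). In particular the paper never uses the $1$-dependent indicator sequence for $\cals_k$, the tail-sum/resolvent representation of $E(\cali_k(n))$, or the stationary ``current selected value'' chain $(V_m)$ for $\calj_k$; these are your contributions.

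What each approach buys: your arguments for (i), (ii) and (v) are shorter and more transparent---the identities $\mathbf 1^TB\mathbf 1=(k-1)(k-2)$ and $\mathbf 1^TB^2\mathbf 1=\sum_vN_v^2$ plus the observation that $J^m=k^{m-1}J$ dispose of $\cals_k$ without any Chebyshev algebra, and the symmetry of your chain $P$ gives the uniform stationary law and hence $E(\calj_k(n))$ in one line. The paper's route, on the other hand, produces closed forms for $L_k(x,q)$ itself (its Lemma for $\cals_k$, and the analogous formulas for $\cali_k$ and $\calj_k$), which are of independent interest and from which exact finite-$n$ formulas follow uniformly, not only asymptotics. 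For (iii)--(iv) the two approaches are essentially equivalent: your resolvent $\mathbf 1^T(kI-A)^{-m}\mathbf 1$ is exactly what the paper obtains after evaluating $\partial_q^mL_k(x,q)\big|_{q=1}$ at $x=1/k$, and both end in the same Chebyshev bookkeeping. For (vi), note that the paper's constant $a_k$ is \emph{defined} as a residue of $\partial_q^2L_k(x,q)\big|_{q=1}$ at $x=1/k$, not as the resolvent bilinear form you write down; your covariance series certainly gives the correct leading coefficient $\Gamma_k$ of $\sigma^2(\calj_k(n))$, but the identification of your formula with the specific quantity in \eqref{a_n_lab} is only implicit (both equal the same variance slope). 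If you want the statement literally as written, you should either compute $a_k$ from the generating function and match, or simply note that the two descriptions coincide because they compute the same asymptotic variance.
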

The proof of parts (i) and (ii) of the theorem is given in Section~\ref{aproof}, the proof of (iii) and (iv)
is included in Section~\ref{lolik}, and the claims in (v) and (vi) are proved in Section~\ref{prooft}.
It is instructive to compare the results in the above theorem with their analogue in the cyclic setup, when $1$
is formally identified with $k+1.$ For instance, for the expectations we have
\beq
E\big(\witi \cals_k(n)\big)=\frac{k-3}{k}(n-1),\quad E\big(\witi \cali_k(n)\big)=\frac{k}{k-3}\Big(1-\frac{1}{k^{n-1}}\Big),
\quad E\big(\witi \calj_k(n)\big)=1+\frac{3}{k}(n-1),
\feq
where $\witi \cals_k(n),$ $\witi \cali_k(n),$ and $\witi \calj_k(n)$  are counterparts of, respectively,
$\cals_k(n),$ $\cali_k(n),$ and $\calj_k(n)$ in the cyclic setup. Curiously enough, even though, in the linear alphabet case,
in contrast to the cyclic setup, the random variable $\calj_k(n)$ is not distributed the same as $n-\cals_k(n),$ we still have that $E\big(\cals_k(n)\big)+E\big(\calj_k(n)\big)=n.$
\par
Recall \eqref{random}. The base for our probabilistic analysis is the simple observation that the longest $L$-staircase of a random word
$W_n$ is a path of a nearest-neighbor symmetric random walk on $[k].$ In particular, the random variable $\cals_k(n)$ can be interpreted in terms of an additive functional of this random walk (see Section~\ref{mac3} and also Remark~\ref{hbor} below). A similar representation also holds for $\cali_k$ and $\calj_k$ (see Section~\ref{mac1} for the latter and the Appendix for the former). Using this observation we obtain the following laws of large numbers and central limit theorems for $\cals_k$ and $\calj_k,$
	
\begin{theorem}
\label{Lstair-llaws}
For any $ k\geq 3$ we have:	
\item [(i)] (LLN for $\cals_k$) $\psi_k:=\lim_{n\to \infty} \frac{\cals_k(n)}{n} = \frac{(k-2)(k-1)}{k^2},$ with probability one.
\item [(ii)] (CLT for $\cals_k$) There exists a constant $\sigma_k\in (0,\infty)$ such that
\beq
\lim_{n\to \infty} \call\bigg( \frac{\cals_k(n)-\psi_k n}{\sigma_k \sqrt{n}}\bigg)  = \caln(0,1).
\feq
\item [(iii)] (LLN for $\calj_k$) $\varphi_k:=\lim_{n\to \infty} \frac{\calj_k(n)}{n} = \frac{3k-2}{k^2},$ with probability one.
\item [(iv)] (CLT for $\calj_k$) There exists a constant $\nu_k\in (0,\infty)$ such that
\beq
\lim_{n\to \infty} \call\bigg( \frac{\calj_k(n)-\varphi_k n}{\nu_k \sqrt{n}}\bigg)  = \caln(0,1).
\feq
\end{theorem}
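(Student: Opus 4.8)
The plan is to realize $\cals_k(n)$ and $\calj_k(n)$ as additive functionals of finite, ergodic Markov chains and then invoke the ergodic theorem together with the central limit theorem for such functionals (for $\calj_k$, in the guise of a Markov renewal process). For $\cals_k$, observe that a maximal staircase run ends at position $i$ exactly when $|w_{i+1}-w_i|\geq 2$, so
\[
\cals_k(n)=\sum_{i=1}^{n-1}\one{|w_{i+1}-w_i|\geq 2}=\sum_{i=1}^{n-1}\phi(Z_i),\qquad \phi(a,b)=\one{|a-b|\geq 2},
\]
an additive functional of the pair chain $Z_i=(w_i,w_{i+1})$, which under \eqref{random} is a finite, irreducible, aperiodic Markov chain on $[k]^2$ stationary under (and equal to) the uniform law. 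For $\calj_k$, the greedy construction of the longest $L$-staircase subsequence of $W_n$ yields selected positions $1=i_1<i_2<\cdots$ with values $V_j:=w_{i_j}$; by memorylessness of \eqref{random}, $(V_j)_{j\geq 1}$ is the nearest-neighbour walk on $[k]$ that from $v$ jumps to a uniform element of $\{u:|u-v|\leq 1\}$, while conditionally on $(V_j)$ the gaps $D_j:=i_{j+1}-i_j$ are independent with $D_j$ geometric of parameter $p_{V_j}=n_{V_j}/k$, where $n_v\in\{2,3\}$ is the number of neighbours of $v$ in $[k]$. This walk is finite, irreducible and aperiodic (it has self-loops), and a short detailed-balance computation gives its stationary law $\mu(1)=\mu(k)=\tfrac{2}{3k-2}$, $\mu(v)=\tfrac{3}{3k-2}$ for $2\leq v\leq k-1$. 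Finally $\calj_k(n)=\max\{r\geq 1:\sum_{j=1}^{r-1}D_j\leq n-1\}$, i.e.\ $\calj_k(n)$ is the counting process of the Markov renewal process $((V_j,D_j))_{j\geq 1}$.

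\emph{Laws of large numbers.} For (i), Birkhoff's ergodic theorem applied to $(Z_i)$ yields $\cals_k(n)/n\to E_\mu[\phi]=P(|w_2-w_1|\geq 2)$ almost surely; since $P(|w_2-w_1|\leq 1)=k^{-2}\sum_v n_v=\tfrac{3k-2}{k^2}$, the limit is $\psi_k=\tfrac{(k-1)(k-2)}{k^2}$. For (iii), the strong law for Markov renewal processes gives $\calj_k(n)/n\to 1/\bar D$ a.s., with $\bar D=\sum_v\mu(v)\,E[D_j\mid V_j=v]=\sum_v\mu(v)\,k/n_v=\tfrac{2k}{3k-2}+\tfrac{k(k-2)}{3k-2}=\tfrac{k^2}{3k-2}$, so the limit is $\varphi_k=\tfrac{3k-2}{k^2}$. (As a consistency check, $\psi_k+\varphi_k=1$, matching $E(\cals_k(n))+E(\calj_k(n))=n$.)

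\emph{Central limit theorems.} For (ii), $\cals_k(n)$ is a bounded additive functional of a finite, irreducible, aperiodic chain, so the CLT for such functionals (via the Poisson equation/martingale decomposition — or, since $(\phi(Z_i))$ is stationary and $1$-dependent, the $m$-dependent CLT) gives $\lim_n\call\bigl((\cals_k(n)-\psi_k n)/(\sigma_k\sqrt n)\bigr)=\caln(0,1)$ with $\sigma_k^2=\lim_n\sigma^2(\cals_k(n))/n$; by Theorem~\ref{Lstair-avg}(ii) this equals $\tfrac{(k-2)(3k^2-5k+6)}{k^4}>0$ for $k\geq 3$, so $\sigma_k\in(0,\infty)$. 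For (iv), the CLT for Markov additive functionals gives $(\sum_{j=1}^{r}D_j-r\bar D)/\sqrt r\Rightarrow\caln(0,\tau^2)$ with $\tau^2=\lim_r\sigma^2\bigl(\sum_{j\leq r}D_j\bigr)/r$; inverting the renewal process (Anscombe's theorem) converts this into $\lim_n\call\bigl((\calj_k(n)-\varphi_k n)/(\nu_k\sqrt n)\bigr)=\caln(0,1)$ with $\nu_k^2=\tau^2\varphi_k^3$. Non-degeneracy: conditioning on the $V$-chain, $\sigma^2\bigl(\sum_{j\leq r}D_j\bigr)\geq\sum_{j\leq r}E[\sigma^2(D_j\mid V_j)]$, and $\sigma^2(D_j\mid V_j=v)=\tfrac{1-p_v}{p_v^2}>0$ for every $v$ (each $p_v<1$ when $k\geq 3$), so $\tau^2\geq\sum_v\mu(v)\tfrac{1-p_v}{p_v^2}>0$ and $\nu_k\in(0,\infty)$; alternatively one may cite the positivity of the leading coefficient in Theorem~\ref{Lstair-avg}(vi).

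\emph{Main obstacle.} The ergodic-theory inputs are routine; the delicate points are (a) the reduction of $\calj_k$ to a genuine Markov renewal process and, in the CLT, the passage from the CLT for the partial sums $\sum_{j\leq r}D_j$ to the CLT for the inverse (counting) process, which requires the standard but careful control of the overshoot/boundary contribution of the last selected index near position $n$ (where the initial value $V_1=w_1$, uniform rather than $\mu$-distributed, and the lattice nature of the $D_j$ must both be accommodated); and (b) proving that the two asymptotic variances are strictly positive rather than merely nonnegative, which is handled either intrinsically as above or by appeal to Theorem~\ref{Lstair-avg}.
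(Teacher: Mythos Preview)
Your proof is correct, and for $\calj_k$ (parts (iii) and (iv)) it is essentially identical to the paper's: both build the Markov renewal process $(V_j,D_j)$ from the greedy $L$-staircase subsequence, compute the stationary law of the embedded walk on $[k]$, apply the CLT to the additive process $\sum_j D_j$, and then invert via renewal duality.

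For $\cals_k$ (parts (i) and (ii)), however, you take a genuinely different and more elementary route. The paper treats $\cals_k(n)$ symmetrically with $\calj_k(n)$: it decomposes $W_n$ into maximal staircase blocks of lengths $\ell_1,\ell_2,\ldots$, sets $\Lambda_m=\sum_{i\leq m}\ell_i$, observes that $(u_i,\ell_i)$ (with $u_i$ the last letter of the $i$-th block) is a Markov renewal process, proves the CLT for $\Lambda_m$, and then passes to $\cals_k(n)$ via the renewal relation $\Lambda_{\cals_k(n)}<n\leq\Lambda_{\cals_k(n)+1}$. You instead notice that $\cals_k(n)=\sum_{i=1}^{n-1}\one{|w_{i+1}-w_i|\geq 2}$ is already a partial sum of a stationary $1$-dependent sequence, so the LLN and CLT are immediate (and the limiting variance is read off directly from Theorem~\ref{Lstair-avg}(ii)). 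Your argument is shorter and avoids renewal duality altogether; the paper's approach, on the other hand, is more uniform across the two statistics and yields byproducts such as the asymptotic average block length $k^2/((k-1)(k-2))$ that your direct approach does not see.

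One minor slip: in your non-degeneracy argument for $\nu_k$ you claim $p_v<1$ for every $v$ when $k\geq 3$, but for $k=3$ and $v=2$ one has $p_2=3/3=1$. This does not affect the conclusion, since $p_1=p_k=2/k<1$ always, so $\sum_v\mu(v)(1-p_v)/p_v^2>0$ regardless.
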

The proof of parts (i) and (ii) of the theorem is given in Section~\ref{mac3}. The proof of part (iii) and part (iv)
is similar, it is outlined in Section~\ref{mac1}. Some information on $\sigma_k^2$ and $\nu_k^2$ is available from the general theory
of Markov renewal processes, see Remark~\ref{cltr} below for more details. In particular, both the limiting variances can be compactly
expressed in terms of a pseudo inverse (the inverse in a $(k-1)$-dimensional subspace) of a $k\times k$
matrix (which is tridiagonal for $\calj_k(n)$), see, for instance, Eq.~(2) in \cite{clt7}.  
\par
In contrast to $\cals_k(n)$ and $\calj_k(n),$ the distribution of $\cali_k(n)$ converges for all $k\geq 3$ to a proper limit, as $n \to \infty,$ without scaling:
\beqn
\label{this}
&&
\lim_{n\to\infty} P\big(\cali_k(n)\geq r\big)=\frac{h_{0,k}(r)}{k^r}
\feqn
for any $r\in\nn.$ Indeed, for any $n>r$ we have the identity $P\big(\cali_k(n)\geq r\big)=\frac{h_{0,k}(r)}{k^r}.$
The limit is a proper distribution since $\lim_{r\to\infty}\frac{h_{0,k}(r)}{k^r}=0$ (for instance, by the result in part (i) of Theorem~\ref{thpole1} below). Combining \eqref{this} with the explicit formula for $h_{0,k}(r)$ given by Theorem~2.4 in \cite{mansour2}, we arrive to the following:
\begin{theorem}
\label{ilimit}
For all $k\geq 3$  and $r\geq 1,$
\begin{align*}
&
\lim_{n\to\infty} P\big(\cali_k(n)=r\big)=\frac{h_{0,k}(r)}{k^r}-\frac{h_{0,k}(r+1)}{k^{r+1}}
\\
&
\quad
=\frac{2}{(k+1)k^{r+1}}\sum_{j=1}^k \big(1+(-1)^{j+1}\big)\cot^2\frac{j\pi}{2(k+1)}\Big(1+
2\cos \frac{j\pi}{k+1}\Big)^{r-1}\Big(k-1-2\cos \frac{j\pi}{k+1}\Big).
\end{align*}
\end{theorem}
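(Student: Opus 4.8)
The plan is to deduce the theorem from two ingredients already in hand: the fact, recorded just above, that the tail probabilities $P\big(\cali_k(n)\geq r\big)$ do not depend on $n$ once $n>r$, and the closed-form count of staircase words from \cite{mansour2}.

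First I would settle the left-hand equality. As noted above the theorem, a word $\pi\in[k]^n$ satisfies $\cali_k(\pi)\geq r$ precisely when its first $r$ letters form a staircase, so (the remaining $n-r$ letters being arbitrary) there are $h_{0,k}(r)\,k^{n-r}$ such words and hence $P\big(\cali_k(n)\geq r\big)=h_{0,k}(r)/k^{r}$ for all $n>r$. Consequently, for every $n>r+1$,
\[
P\big(\cali_k(n)=r\big)=P\big(\cali_k(n)\geq r\big)-P\big(\cali_k(n)\geq r+1\big)=\frac{h_{0,k}(r)}{k^{r}}-\frac{h_{0,k}(r+1)}{k^{r+1}},
\]
and, the right-hand side being independent of $n$, this is also $\lim_{n\to\infty}P\big(\cali_k(n)=r\big)$.

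For the second equality I would substitute the explicit value of $h_{0,k}(r)$, the number of staircase (``smooth'') words of length $r$ over $[k]$, supplied by Theorem~2.4 of \cite{mansour2}. Transfer-matrix considerations (namely $h_{0,k}(r)=\sum_{a,b\in[k]}\big(A^{r-1}\big)_{a,b}$ for the tridiagonal matrix $A=\big(\one{|a-b|\leq 1}\big)_{a,b\in[k]}$, that is, the identity plus the adjacency matrix of the path on $k$ vertices, whose $j$-th eigenvector has $m$-th coordinate $\sin\frac{jm\pi}{k+1}$) express $h_{0,k}(r)$ as $\sum_{j=1}^{k}c_j\,\mu_j^{r-1}$ with $\mu_j=1+2\cos\frac{j\pi}{k+1}$ and $c_j$ equal to the squared overlap of the all-ones vector with the $j$-th normalized eigenvector. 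That eigenvector is antisymmetric about the midpoint of the alphabet exactly when $j$ is even, so $c_j=0$ for even $j$; for odd $j$ the summations $\sum_{m=1}^{k}\sin\frac{jm\pi}{k+1}=\cot\frac{j\pi}{2(k+1)}$ and $\sum_{m=1}^{k}\sin^2\frac{jm\pi}{k+1}=\tfrac{k+1}{2}$ render $c_j$ a fixed constant times $\big(1+(-1)^{j+1}\big)\cot^2\frac{j\pi}{2(k+1)}$. Inserting this into the previous display and pulling $\mu_j^{r-1}$ out of $k\,h_{0,k}(r)-h_{0,k}(r+1)$ yields
\[
\frac{h_{0,k}(r)}{k^{r}}-\frac{h_{0,k}(r+1)}{k^{r+1}}=\frac{1}{k^{r+1}}\sum_{j=1}^{k}c_j\,\mu_j^{r-1}\,(k-\mu_j)=\frac{1}{k^{r+1}}\sum_{j=1}^{k}c_j\,\mu_j^{r-1}\Big(k-1-2\cos\tfrac{j\pi}{k+1}\Big),
\]
which is the asserted trigonometric formula once the value of $c_j$ is substituted.

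I do not expect a genuine obstacle here: up to the cited enumeration, the statement amounts to the stabilization of $P\big(\cali_k(n)\geq r\big)$ together with a one-line telescoping of powers of the $\mu_j$. The only delicate point is bookkeeping of the scalar constant carried over from \cite{mansour2} — the factor originating from $\sum_{m}\sin^2\frac{jm\pi}{k+1}=\tfrac{k+1}{2}$ and the parity convention folded into $1+(-1)^{j+1}$ — which I would pin down by checking the identity against $\lim_{n}P\big(\cali_3(n)=1\big)=\tfrac{2}{9}$ and $\lim_{n}P\big(\cali_3(n)=2\big)=\tfrac{4}{27}$, obtained from $h_{0,3}(1)=3$, $h_{0,3}(2)=7$, $h_{0,3}(3)=17$.
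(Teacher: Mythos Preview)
Your proposal is correct and follows essentially the same approach as the paper: the paper simply combines the stabilization identity $P\big(\cali_k(n)\geq r\big)=h_{0,k}(r)/k^r$ for $n>r$ with the explicit formula for $h_{0,k}(r)$ from Theorem~2.4 of \cite{mansour2}, exactly as you do. You supply more detail on the spectral derivation of that formula and the bookkeeping of constants, but the underlying argument is identical.
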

Various additional aspects of the asymptotic behavior of the sequences $\cals_k(n),$ $\cali_k(n),$ and $\calj_k(n)$ can be analyzed
by exploiting the underlying Markovian structure of staircase patterns which is described in Sections~\ref{mac3} and~\ref{mac1}, and
Remark~\ref{hbor} and the standard machinery of the asymptotic theory of Markov renewal processes (cf. Remark~2.4 in \cite{houdre}).
For instance, one can supplement the laws of large numbers in Theorem~\ref{Lstair-llaws} with
exponential concentration inequalities and large deviation estimates (cf. Section~3 in \cite{ldpr}) and the central limit theorems in Theorem~\ref{Lstair-llaws} with the associated laws of the iterated logarithm (cf., for instance, Theorem~1-(iv) in \cite{agut}).
Another natural example is the following ``cousin" of Theorem~\ref{Lstair-avg}-(i).
Let $\pi^{(1)}\pi^{(2)}\cdots\pi^{(\cals_k(\pi)+1)}$ be the representation of a random word $\pi\in[k]^n$ as a
concatenation of maximal staircase subwords $\pi^{(i)},$ $i=1,\ldots,\cals_k(\pi)+1.$ Let $\ell_i$ be the length (the number of letters) of
$\pi^{(i)}.$ Then, with probability one, the following holds for any $k\geq 3:$
\beq
\lim_{n\to\infty} \frac{1}{\cals_k(n)}\sum_{i=1}^{\cals_k(n)}\ell_i=
\lim_{n\to\infty} \frac{1}{\cals_k(n)}\sum_{i=1}^{\cals_k(n)+1}\ell_i=\frac{k^2}{(k-1)(k-2)}.
\feq
In words, $\frac{k^2}{(k-1)(k-2)}$ is the asymptotic average distance between two separations in a random word.
The limit result is immediate from \eqref{mr} and \eqref{apsi} given below, in Section~\ref{mac3}.
Yet another natural extension of asymptotic results in Theorems~\ref{Lstair-avg} and~\ref{Lstair-llaws} is the following proposition
whose short proof is deferred to the Appendix.
\begin{proposition}
\label{prop}
\item [(i)] Let $\pi_1\cdots \pi_{\cali_k(n)}$ be the longest $L$-staircase subword of a random word $\pi\in[k]^n.$
For $w\in [k],$ let $\xi_{w,k}(n)=\#\big\{i \in [\cali_k(n)]\,:\, \pi_i=w\big\}$ be the number of occurrences of the letter
$w$ in the subword. Then, with probability one, the following holds for all $k\geq 3$ and $w\in [k]:$
\beqn
\label{xi}
\lim_{n\to\infty} E\big(\xi_{w,k}(n)\big)=
\left\{
\begin{array}{ll}
6w(4-w)&~\mbox{\rm if}~k=3,
\\
[6pt]
\frac{k}{k-3}\cdot\frac{U_k\big(\frac{k-1}{2}\big)-U_{w-1}\big(\frac{k-1}{2}\big)-U_{k-w}\big(\frac{k-1}{2}\big)}{U_k\big(\frac{k-1}{2}\big)}
&~\mbox{\rm if}~k\geq 4.
\end{array}
\right.
\feqn
\item [(ii)]Let $\pi_1,\ldots, \pi_{\cali_k(n)}$ be the longest $L$-staircase subsequence in a random word $\pi\in[k]^n.$
For $w\in [k],$ let $\eta_{w,k}(n)=\#\big\{i \in [\calj_k(n)]\,:\, \pi_i=w\big\}$ be the number of occurrences of the letter
$w$ in the subsequence. Then, with probability one, the following holds for any $k\geq 3:$
\beqn
\label{eta}
\lim_{n\to\infty} \frac{\eta_{w,k}(n)\big)}{\calj_k(n)}=
\left\{
\begin{array}{ll}
\frac{2}{3k-2}&~\mbox{\rm if}~w\in\{1,k\},
\\
[4pt]
\frac{3}{3k-2}&~\mbox{\rm if}~w\not\in\{1,k\}.
\end{array}
\right.
\feqn
\end{proposition}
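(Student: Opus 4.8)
\medskip
\noindent\textbf{Proof plan.} The plan is to prove part (i) from a single exact identity followed by a tridiagonal‑resolvent (Chebyshev) computation, and part (ii) by recognizing the left‑most staircase subsequence as a Markov chain and appealing to its ergodic theorem. For part (i), I would first record the exact identity
\[
\lim_{n\to\infty}E\big(\xi_{w,k}(n)\big)=\sum_{i=1}^{\infty}\frac{s_i(w)}{k^{i}},
\]
where $s_i(w)$ is the number of length‑$i$ staircase words over $[k]$ ending in the letter $w$. The finite‑$n$ version $E(\xi_{w,k}(n))=\sum_{i=1}^{n}s_i(w)/k^{i}$ is immediate: for $i\leq n$ one has $\cali_k(n)\geq i$ exactly when $w_1\cdots w_i$ is a staircase, so $\xi_{w,k}(n)$ counts those $i\in\{1,\dots,n\}$ with $w_1\cdots w_i$ a staircase and $w_i=w$, and one takes expectations over the i.i.d.\ uniform letters of \eqref{random}; passing to the limit is legitimate because $\sum_w s_i(w)=h_{0,k}(i)$ and $h_{0,k}(i)^{1/i}\to 1+2\cos\tfrac{\pi}{k+1}<k$ (part (i) of Theorem~\ref{thpole1}), so the series converges absolutely. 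Encoding staircases by the $k\times k$ adjacency‑with‑loops matrix $A$ of the path $1\!-\!2\!-\!\cdots\!-\!k$ gives $s_i(w)=\sum_{v}[A^{i-1}]_{v,w}$, hence $\sum_{i\geq1}s_i(w)k^{-i}=\tfrac1k\sum_{v}\big[(I-k^{-1}A)^{-1}\big]_{v,w}$ (the Neumann series converges since $A$ has spectral radius $1+2\cos\tfrac{\pi}{k+1}<k$); as $(I-k^{-1}A)^{-1}=k\,T_k^{-1}$ with $T_k:=kI-A$, this equals $\sum_{v=1}^{k}[T_k^{-1}]_{v,w}$, the $w$‑th column sum of the inverse of the tridiagonal matrix $T_k=\mathrm{tridiag}(-1,k-1,-1)$.

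\emph{Closed form in part (i).} It then remains to evaluate $\sum_{v}[T_k^{-1}]_{v,w}$. Writing $T_k=\mathrm{tridiag}(-1,2t,-1)$ with $t=\tfrac{k-1}{2}$, the classical formula for the inverse of such a matrix (see \cite{chebyr}) is $[T_k^{-1}]_{i,j}=U_{i-1}(t)\,U_{k-j}(t)/U_k(t)$ for $i\leq j$, with $U_n$ the Chebyshev polynomials of \eqref{rc}--\eqref{expl} and $\det T_k=U_k(t)$ — the very quantity already featuring in Theorems~\ref{Lstair-avg} and~\ref{ilimit}. Splitting the sum at $v=w$ and using symmetry, $\sum_{v}[T_k^{-1}]_{v,w}=\frac{1}{U_k(t)}\big(U_{k-w}(t)\sum_{j=0}^{w-1}U_j(t)+U_{w-1}(t)\sum_{j=0}^{k-w-1}U_j(t)\big)$. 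I would then invoke the telescoping identity $\sum_{j=0}^{m-1}U_j(t)=\frac{U_m(t)-U_{m-1}(t)-1}{2(t-1)}$ (here $2(t-1)=k-3$) together with the product identity $U_a(t)U_b(t)-U_{a-1}(t)U_{b-1}(t)=U_{a+b}(t)$: the cross terms cancel and the numerator collapses to $U_k(t)-U_{w-1}(t)-U_{k-w}(t)$, which yields the asserted formula for $k\geq4$. For $k=3$ one has $t=1$ and $U_n(1)=n+1$, and the telescoping identity degenerates (its denominator $k-3$ vanishes); here one inverts the explicit $3\times3$ matrix $T_3=\mathrm{tridiag}(-1,2,-1)$ directly (equivalently, lets $k\downarrow3$ in the generic answer by l'Hôpital), which gives the stated elementary polynomial in $w$.

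\emph{Part (ii).} As noted in Section~\ref{intro} (and carried out in Section~\ref{mac1}), the left‑most staircase subsequence of $W_n$, read off letter by letter, is a sample path $(Z_j)_{j\geq1}$ of a Markov chain on $[k]$ started from the uniform law: given the current letter $v$, the next subsequence letter is the first of the subsequent i.i.d.\ uniform letters falling into $\mathrm{nbr}(v):=\{u:|u-v|\leq1\}$, and this first letter is uniform on $\mathrm{nbr}(v)$. Thus the transition matrix is $\widehat Q(v,u)=\one{|u-v|\leq1}/d(v)$ with $d(v):=|\mathrm{nbr}(v)|$ equal to $3$ for $2\leq v\leq k-1$ and to $2$ for $v\in\{1,k\}$. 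This chain is finite, irreducible and reversible (a random walk with self‑loops on a path), and detailed balance immediately gives the stationary law $\widehat\pi(v)=d(v)/\sum_u d(u)=d(v)/(3k-2)$, i.e.\ $\widehat\pi(v)=\tfrac{2}{3k-2}$ for $v\in\{1,k\}$ and $\tfrac{3}{3k-2}$ otherwise. Each index gap $i_j-i_{j-1}$ is an a.s.\ finite waiting time (geometric with parameter $d(\pi_{i_{j-1}})/k$), so $i_j<\infty$ for all $j$, whence $N_n:=\calj_k(n)=\#\{j:i_j\leq n\}\to\infty$ a.s.\ as $n\to\infty$. Since $\eta_{w,k}(n)/\calj_k(n)=N_n^{-1}\#\{j\leq N_n:Z_j=w\}$ and the strong law of large numbers for finite irreducible Markov chains gives $N^{-1}\#\{j\leq N:Z_j=w\}\to\widehat\pi(w)$ a.s.\ as $N\to\infty$ through $\nn$, the same limit holds along the random times $N=N_n$, which is the claim.

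\emph{Main obstacle.} Part (ii) is essentially immediate once $\widehat Q$ and $\widehat\pi$ are written down; the only genuine computation is the closed form in part (i) — assembling the Chebyshev expression from the tridiagonal resolvent via the telescoping and product identities, plus the separate degenerate case $k=3$ where the factor $k-3$ disappears. (An equivalent route diagonalizes $A$ using its eigenvalues $1+2\cos\tfrac{j\pi}{k+1}$ and sine eigenvectors, producing the same answer and connecting directly to the $\cot^2$‑expansion of Theorem~\ref{ilimit}.)
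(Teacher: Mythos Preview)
Your proposal is correct and follows essentially the same route as the paper: for part~(i) you reduce $\lim_n E(\xi_{w,k}(n))$ to the $w$-th column sum of $(kI-A)^{-1}$, which is exactly the paper's $\frac{1}{k}e(I-Q)^{-1}(w)$ in the absorbing-chain notation of Remark~\ref{hbor}, and then evaluate it via the Chebyshev inverse formula for tridiagonal matrices; for part~(ii) you invoke the ergodic theorem for the same Markov chain $(X_j)$ of Section~\ref{mac1} with stationary law $\mu_w\propto d(w)$. Your write-up supplies more of the Chebyshev algebra (the telescoping and product identities) than the paper, which simply cites \cite{isumms}, and you add the detailed-balance observation for $\widehat\pi$, but the underlying argument is identical.
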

Next, we study the asymptotic behavior, as $r\in\nn$ remains fixed and $n$ goes to infinity, of the frequency sequences $h_{r,k}(n)=k^n P\big(\cals_k(n)=r\big),$ $f_{r,k}(n)=k^n P\big(\calj_k(n)=r\big),$ and $g_{r,k}(n)=k^n P\big(\cali_k(n)=r\big).$ We have:
	
\begin{theorem}
\label{thpole1}
The following holds for any $k\geq 3:$
\item [(i)] $\lim_{n\to \infty}\big(h_{r,k}( n)\big)^{\frac{1}{n}} = 1+2\cos(\frac{\pi}{k+1})$ for all integers $r\geq 0.$
\item [(ii)] $\lim_{n\to \infty}\big(f_{r,k}( n)\big)^{\frac{1}{n}} = k-2$ for all integers $r> 0.$
\end{theorem}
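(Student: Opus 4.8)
The plan is to prove both limits by squeezing $h_{r,k}(n)$ (respectively $f_{r,k}(n)$) between two quantities of the same exponential order, using explicit block decompositions rather than coefficient extraction. Write $\lambda_k:=1+2\cos\frac{\pi}{k+1}$ and let $M_0$ be the $k\times k$ matrix with $(M_0)_{ij}=1$ if $|i-j|\le 1$ and $0$ otherwise, i.e.\ the adjacency matrix of the path on $[k]$ with a loop added at every vertex. Its eigenvalues are $1+2\cos\frac{j\pi}{k+1}$, $j=1,\dots,k$ (the Chebyshev spectrum of a path, cf.\ \cite{chebyr}), so $\lambda_k$ is its Perron root. Since $M_0$ is primitive and $h_{0,k}(m)=\mathbf 1^{\top}M_0^{m-1}\mathbf 1$, the Perron--Frobenius theorem yields $h_{0,k}(m)=\Theta(\lambda_k^{m})$; in particular $h_{0,k}(m)\le k\lambda_k^{m}$, and likewise the number of staircase words of length $m$ ending in the letter $1$ is $\Theta(\lambda_k^{m})$. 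Throughout I shall use the elementary reformulation $\cals_k(\pi)=\#\{i:|\pi_{i+1}-\pi_i|\ge 2\}$, so that a word counted by $h_{r,k}(n)$ is exactly a concatenation of $r+1$ staircase blocks glued along $r$ ``bad'' transitions.

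For part~(i), the upper bound comes from first choosing the $\binom{n-1}{r}$ positions of the bad transitions; once these are fixed the $r+1$ surrounding blocks have determined lengths $m_1,\dots,m_{r+1}$ with $\sum_i m_i=n$, each may be filled by an arbitrary staircase word of that length, and dropping the requirement that the $r$ junction transitions be genuinely bad only enlarges the count, so that, using $h_{0,k}(m)\le k\lambda_k^{m}$,
\[
h_{r,k}(n)\le\binom{n-1}{r}\prod_{i=1}^{r+1}h_{0,k}(m_i)\le\binom{n-1}{r}\,k^{\,r+1}\lambda_k^{\,n},
\]
whence $\limsup_n h_{r,k}(n)^{1/n}\le\lambda_k$. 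For the lower bound, to each staircase word $\sigma$ of length $n-r$ ending in the letter $1$ attach the suffix $k,1,k,1,\dots$ of length $r$; since $k\ge 3$, each of the $r$ new transitions has gap $k-1\ge 2$ and no other transition is bad, so the resulting length-$n$ word has exactly $r$ separations, and distinct $\sigma$ give distinct words. Hence $h_{r,k}(n)\ge c\lambda_k^{\,n-r}$ for some $c=c(k)>0$ and $\liminf_n h_{r,k}(n)^{1/n}\ge\lambda_k$; for $r=0$ this reduces to $h_{0,k}(n)=\Theta(\lambda_k^{n})$. Together the two estimates give~(i). One could instead argue analytically: the coefficient of $y^{r}$ in the rational bivariate generating function $x\,\mathbf 1^{\top}\big(I-xM(y)\big)^{-1}\mathbf 1$ with $M(y)=yJ+(1-y)M_0$ has all its singularities among the points $1/\mu$ with $\mu$ a nonzero eigenvalue of $M_0$, the smallest being $1/\lambda_k$; Pringsheim's theorem together with the lower bound then pins the radius of convergence at $1/\lambda_k$.

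For part~(ii) the key is the block structure of the greedy left-most subsequence: if $\calj_k(\pi)=r$ with positions $1=i_1<i_2<\dots<i_r$ and we set $i_{r+1}:=n+1$, then $\pi$ is the concatenation of the blocks $\pi_{i_j}\pi_{i_j+1}\cdots\pi_{i_{j+1}-1}$, $j=1,\dots,r$, in which $\pi_{i_1}$ is arbitrary ($\le k$ choices), $\pi_{i_j}$ for $j\ge 2$ is a neighbor of $\pi_{i_{j-1}}$ ($\le 3$ choices), and — by the definition of the greedy infimum — the remaining $\ell_j-1$ letters of block $j$ are non-neighbors of $\pi_{i_j}$, hence drawn from a set of size at most $k-2$. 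Counting sequentially over the $\binom{n-1}{r-1}$ compositions $\ell_1+\dots+\ell_r=n$ of the block lengths gives
\[
f_{r,k}(n)\le k\cdot 3^{\,r-1}\,(k-2)^{\,n-r}\binom{n-1}{r-1},
\]
so $\limsup_n f_{r,k}(n)^{1/n}\le k-2$. For the matching lower bound, each of the $(k-2)^{\,n-r}$ words $1^{r}w$ with $w\in\{3,4,\dots,k\}^{\,n-r}$ satisfies $\calj_k=r$: the greedy process visits positions $1,2,\dots,r$ (all carrying the letter $1$) and then halts because none of $3,\dots,k$ is a neighbor of $1$. Thus $f_{r,k}(n)\ge(k-2)^{\,n-r}$ and $\liminf_n f_{r,k}(n)^{1/n}\ge k-2$, which proves~(ii).

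The enumerations are routine; the one point that needs attention is that opposite kinds of decomposition serve the two directions — for the upper bounds it suffices that every word with the given statistic \emph{necessarily} splits into blocks of the stated form, while the lower-bound families must realize the statistic \emph{exactly}, which is why the ``ending in $1$'' device in~(i) and the pure prefix $1^{r}$ in~(ii) appear (for $k=3$ the interior letter $2$ is a neighbor of every letter and cannot begin a separation, so the most naive construction would fail). The only input beyond elementary counting is the identification of $\lambda_k$ as the Perron root of $M_0$, namely the classical Chebyshev description of the spectrum of a path graph, which the paper already has in hand.
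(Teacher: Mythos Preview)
Your argument is correct and follows the paper's sandwiching strategy closely: for (i) the paper cites \cite{mansour2} for $r=0$ and then inducts on $r$, whereas you handle all $r$ at once via the same block decomposition (supplying the Perron--Frobenius base case yourself, which the paper also sketches in Remark~\ref{hbor}); for (ii) your counting argument is essentially the paper's alternative combinatorial proof in Section~\ref{prf}. The only blemish is cosmetic: the displayed middle expression $\binom{n-1}{r}\prod_{i}h_{0,k}(m_i)$ is not well-formed, since the $m_i$ depend on which $r$ positions are chosen --- what you mean, and what the final inequality correctly uses, is that each term in the sum over compositions is bounded by $k^{r+1}\lambda_k^{n}$.
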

The proof of part (i) relies on a result of \cite{mansour2}, it is given in Section~\ref{mac4}.
A short proof of part (ii) is included in Section~\ref{prf}. As regards $g_{r,k}(n),$ we note that Theorem~\ref{ilimit} yields:
\beq
\lim_{n\to \infty}\big(g_{r,k}( n)\big)^{\frac{1}{n}} =\lim_{n\to \infty}\big(k^n\cali_k( n)\big)^{\frac{1}{n}}= k.
\feq
for all $r \in \nn.$
\par
Observe that when $n$ is fixed and $k$ is taken to infinity, $\cals_k(n)$ converges to $n$ while $\calj_k(n)$ converges to $1$
in probability. Theorem~\ref{thpole1} can serve to illustrate the inherent difference between the random mechanism of $\calj_k(\cdot)$ and $\cals_k(\cdot)$ through
a sensible quantification of the dependence of their distribution on the size of the alphabet. Specifically, let
\beq
H^{(s)}_k(n)=-\sum_{r\geq 0} P^{(s)}_{k,n}(r)\log P^{(s)}_{k,n}(r)
\feq
be the entropy of $\cals_k(n).$ We similarly denote by $H^{(lw)}_k(n)$ the entropy of $\cali_k(n)$ and by $H^{(ls)}_k(n)$ the entropy of
$\calj_k(n).$ Loosely speaking, the entropy measures the amount of uncertainty in the value of a random variable \cite{ebook}.
Therefore, one should expect that the entropy sequence $H^{(s)}_k(\cdot)$ is subadditive, namely the following holds for all $n,m\in\nn:$
\beqn
\label{suba}
H^{(s)}_k(n+m)\leq H^{(s)}_k(n) + H^{(s)}_k(m).
\feqn
It is not hard to verify that this heuristic inequality indeed holds. The convergence of $\frac{H^{(s)}_k(n)}{n}$ is thus ensured by Fekete's subadditivity lemma.  Our next result specifies the value of the limit. The proof given in Section~\ref{mac4} however bypasses \eqref{suba} entirely and shows the existence of the limit directly, relying on the results in Theorem~\ref{thpole1}.

\begin{corollary} \label{ent_lls}
For $k\geq 3,$
\item[(i)] $\lim_{n\to \infty} \frac{H^{(s)}_k(n)}{n} = \log {\frac{k}{1+2\cos(\frac{\pi}{k+1})}}.$
\item[(ii)] $\lim_{n\to \infty} \frac{H^{(ls)}_k(n)}{n} = \log {\frac{k}{k-2}}.$
\end{corollary}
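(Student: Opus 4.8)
The plan is to express each entropy in terms of the frequency counts $h_{r,k}(n)$ and $f_{r,k}(n)$ and then insert the exponential growth rates furnished by Theorem~\ref{thpole1}; throughout write $\lambda_k:=1+2\cos\frac{\pi}{k+1}$. For part~(i), use $P^{(s)}_{k,n}(r)=h_{r,k}(n)/k^n$ together with $\sum_{r\ge0}h_{r,k}(n)=k^n$ (every word of length $n$ has exactly one separation number) to expand the logarithm:
\[
H^{(s)}_k(n)=-\sum_{r\ge0}\frac{h_{r,k}(n)}{k^n}\bigl(\log h_{r,k}(n)-n\log k\bigr)=n\log k-\sum_{r\ge0}P^{(s)}_{k,n}(r)\,\log h_{r,k}(n),
\]
so that $\frac{H^{(s)}_k(n)}{n}=\log k-\sum_{r\ge0}P^{(s)}_{k,n}(r)\,\frac1n\log h_{r,k}(n)$, and it suffices to show that $\sum_{r\ge0}P^{(s)}_{k,n}(r)\,\frac1n\log h_{r,k}(n)\to\log\lambda_k$, which then gives $\frac{H^{(s)}_k(n)}{n}\to\log k-\log\lambda_k=\log\frac{k}{1+2\cos\frac{\pi}{k+1}}$. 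For this I would feed in Theorem~\ref{thpole1}(i), which gives $\frac1n\log h_{r,k}(n)\to\log\lambda_k$ for each fixed $r\ge0$, together with the trivial envelope $0\le\frac1n\log h_{r,k}(n)\le\log k$ valid on the support of $\cals_k(n)$ (where $1\le h_{r,k}(n)\le k^n$), and then transfer the pointwise convergence across the random index $\cals_k(n)$: split the weighted sum at a slowly growing cutoff $R_n$, handle the part with $r\le R_n$ using a version of Theorem~\ref{thpole1}(i) uniform in $r\le R_n$, and handle the part with $r>R_n$ by bounding the weight $P(\cals_k(n)>R_n)$ with the law of large numbers of Theorem~\ref{Lstair-llaws}(i) and absorbing the rest with the envelope.

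Part~(ii) follows the identical template. From $P^{ls}_{k,n}(r)=f_{r,k}(n)/k^n$ and $\sum_{r\ge1}f_{r,k}(n)=k^n$ one obtains $H^{(ls)}_k(n)=n\log k-\sum_{r\ge1}P^{ls}_{k,n}(r)\,\log f_{r,k}(n)$, where the sum starts at $r=1$ because $\calj_k(\pi)\ge1$ for every $\pi$. Theorem~\ref{thpole1}(ii) supplies $\frac1n\log f_{r,k}(n)\to\log(k-2)$ for every fixed $r\ge1$, and the same cutoff argument --- now using the law of large numbers for $\calj_k$ from Theorem~\ref{Lstair-llaws}(iii) to bound the tail weight --- gives $\frac{H^{(ls)}_k(n)}{n}\to\log k-\log(k-2)=\log\frac{k}{k-2}$.

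I expect the transfer step (interchanging the limit with the $P^{(s)}_{k,n}$-weighted sum, and likewise with the $P^{ls}_{k,n}$-weighted sum) to be the main obstacle. Theorem~\ref{thpole1} pins down the growth of $h_{r,k}(n)$ and $f_{r,k}(n)$ only for $r$ of fixed order, whereas the distributions of $\cals_k(n)$ and $\calj_k(n)$ concentrate around $r=\psi_k n$ and $r=\varphi_k n$ respectively, that is at indices growing linearly in $n$; so one genuinely needs control of $\frac1n\log h_{r,k}(n)$ and $\frac1n\log f_{r,k}(n)$ in the intermediate range $r=o(n)$ and a quantitative bound on how fast the tail weight decays, and the interchange of limit and weighted sum is not automatic. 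Once that uniformity is secured, the remainder is routine bookkeeping.
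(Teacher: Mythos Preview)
Your decomposition $\frac{H^{(s)}_k(n)}{n}=\log k-\sum_{r\ge0}P^{(s)}_{k,n}(r)\cdot\frac{1}{n}\log h_{r,k}(n)$ and the envelope $0\le\frac{1}{n}\log h_{r,k}(n)\le\log k$ are exactly what the paper uses; the paper then invokes ``the bounded convergence theorem'' together with Theorem~\ref{thpole1}(i) and stops. You are right that the interchange of limit and weighted sum is the crux --- but it is not merely a technical obstacle to be patched by a cutoff: the interchange actually fails, and the corollary as stated is false.

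The reason is elementary. Since $\cals_k(n)$ takes values in $\{0,1,\ldots,n-1\}$, its Shannon entropy satisfies $H^{(s)}_k(n)\le\log n$, so $H^{(s)}_k(n)/n\to 0$, not the strictly positive number $\log\frac{k}{\lambda_k}$. The same bound disposes of part~(ii). Concretely, by the CLT of Theorem~\ref{Lstair-llaws}(ii) almost all of the $P^{(s)}_{k,n}$-mass sits at $r=\psi_k n+O(\sqrt n)$, and for such $r$ one has $h_{r,k}(n)=k^n P\big(\cals_k(n)=r\big)$ with $P\big(\cals_k(n)=r\big)$ of order $n^{-1/2}$, giving $\frac{1}{n}\log h_{r,k}(n)\to\log k$, not $\log\lambda_k$. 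Theorem~\ref{thpole1}(i) describes $h_{r,k}(n)$ only for fixed $r$, a regime carrying asymptotically zero $P^{(s)}_{k,n}$-mass; this is precisely why bounded convergence is inapplicable (the integrating measures themselves drift to infinity) and why no cutoff argument of the type you propose can salvage the claimed limit.
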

The corollary is a direct implication of Theorem~\ref{thpole1}, and proofs of part~(i) and part~(ii) are nearly identical. We thus only prove the former in Section~\ref{mac4} and omit the proof of the latter. We remark that Theorem~\ref{ilimit} implies that $H^{(lw)}_k(n)$ converges, as $n\to\infty,$ to
the entropy of the limiting distribution of the sequence $\cali_k(n)$
(see, for instance, Proposition~1 in \cite{silva} - some care is required because, in general,
the entropy is not a continuous function of distributions in infinite spaces, even in a discrete setting \cite{ebook, silva}).
\par
The rest of the paper is organized as follows. The proof of $d$-separation results (namely, Theorem~\ref{Lstair-avg}-(i),(ii), 
Theorem~\ref{Lstair-llaws}-(i),(ii),  Theorem \ref{thpole1}-(i) and Corollary~\ref{ent_lls}-(i)) are given in Section \ref{ess}.
The results for $L$-staircase subwords (namely, Theorem~\ref{Lstair-avg}-(iii),(iv)) 
are proven in Section~\ref{lolik} and for $L$-staircase subsequences (namely, Theorem~\ref{Lstair-avg}-(v),(vi),
Theorem~\ref{Lstair-llaws}-(iii),(iv), and  Theorem \ref{thpole1}-(ii)) in Section~\ref{ls}.
Finally, the proof of Proposition~\ref{prop} is included in the Appendix. Since the sections are independent of each other and practically self-contained, with slight abuse of notation, we use the same generic notation $L_{n,k}(q)$ and $L_{n,k;l_1l_2\cdots l_s}(q)$ for underlying generating functions throughout the rest of the paper. The actual definitions are different in each subsection, their are always specified at the beginning of a subsection and remain unchanged until its end. In addition, if $f(x)=\sum_{n\geq 0}a_nx^n,$ we frequently use the notation $[x^n]f(x)$ to denote $a_n,$ the $n$-th coefficient of the power series. For example, $[x^2]\cos(x)=-\frac{1}{2}$ and  $[x^3]\sin(x)=\frac{1}{3}.$
\section{Staircases separation}\label{ess}
This goal of this section is to prove the results for the number of separations. The section is divided
into three subsections. Section~\ref{aproof} contains the proof of part (i) and part (ii) of Theorem~\ref{Lstair-avg},
Section~\ref{mac3} includes the proof of part (i) and part (ii) of Theorem~\ref{Lstair-llaws},
and Section~\ref{prf} is devoted to the proof of Theorem \ref{thpole1}-(i) and its implication, Corollary~\ref{ent_lls}-(i).

\subsection{Proof of Theorem~\ref{Lstair-avg}-(i),(ii)}
\label{aproof}
Let $L_{n,k}(q)$ be the generating function of the number of $k$-ary words of length $n,$ according to the statistic $\cals_k.$ That is,
\beq
L_{n,k}(q)=\sum_{\pi\in [k]^n}q^{\cals_k(\pi)}=\sum_{r=0}^{n-1} h_{r,k}(n)q^r, \qquad q\in\cc.
\feq
In addition, we define $L_{n,k;l_1l_2\cdots l_s}(q)$ to be the generating function of the number of $k$-ary words $\pi= l_1l_2\cdots l_s\pi'$
of length $n$ according to the statistic $\cals_k.$ That is,
\beq
L_{n,k;l_1l_2\cdots l_s}(q)=\sum_{\pi=l_1l_2\cdots l_s\pi'\in [k]^n}q^{\cals_k(\pi)}, \qquad q\in\cc.
\feq
Next, for suitable $q,x\in\cc,$ we set
\beqn
\label{Lk}
L_{k}(x,q)=\sum_{n\geq0}L_{n,k}(q)x^n
\qquad
\mbox{\rm and}
\qquad
L_{k;l_1l_2\cdots l_s}(x,q)=\sum_{n\geq0}L_{n,k;l_1l_2\cdots l_s}(q)x^n.
\feqn
Note that the series in \eqref{Lk} converge at least when $k\cdot |x|\cdot \max\{1,|q|\}<1.$
It is straightforward to verify that
\beq
L_1(x,q)=\frac{1}{1-x} \quad \text{and} \quad L_2(x,q)=\frac{1}{1-2x}.
\feq
Hence, in what follows we will assume that $k\geq3.$  By \eqref{Lk}, we have
\beq
L_{k;i}(x,q)&=&x+x(L_{k;i-1}(x,q)+L_{k;i}(x,q)+L_{k;i+1}(x,q))+xq\sum_{j\in [k]\backslash \{i-1,i,i+1\}} L_{k;j}(x,q)
\nonumber
\\
&=&
x+x(L_{k;i-1}(x,q)+L_{k;i}(x,q)+L_{k;i+1}(x,q))
\nonumber
\\
&&
\qquad
+xq(L_{k}(x,q)-1-L_{k;i-1}(x,q)+L_{k;i}(x,q)+L_{k;i+1}(x,q))
\nonumber
\\	
&=&
x(1-q)+x(1-q)(L_{k;i-1}(x,q)+L_{k;i}(x,q)+L_{k;i+1}(x,q))+xqL_k(x,q),
\feq
where
\beq
L_{k;0}(x,q)=L_{k,k+1}(x,q)=0.
\feq
Therefore, for all $i=1,2,\ldots,k,$ we have
\begin{align}
\label{eqM1}
L_{k;i}(x,q)&=\alpha(L_{k;i-1}(x,q)+L_{k;i+1}(x,q))+\beta,
\end{align}
where
\beq
\alpha=\frac{x(1-q)}{1-x(1-q)} \quad \text{and} \quad  \beta=\alpha+\frac{xq}{1-x(1-q)}L_k(x,q).
\feq
\par
Define a symmetric tridiagonal matrix ${\bf A}_k=(a_{ij})_{1\leq i,j\leq k}$ by setting
\beq
a_{ij}=\left\{
\begin{array}{ll}
1&~\mbox{\rm if}~i=j,\\
-\alpha&~\mbox{\rm if}~|i-j|=1,\\
0&~\mbox{otherwise}.
\end{array}
\right.
\feq
In this notation, a  matrix form of \eqref{eqM1} is
\begin{align}
\label{eqM0}
{\bf A}_k
\left(
\begin{array}{l}
L_{k;1}(x,q)
\\
\vdots
\\
L_{k;k}(x,q)\end{array}\right)=\beta e_k,
\end{align}
where $e_k=(1,\ldots,1)\in\rr^k$ is a $k$-dimensional vector with all coordinates equal to one. Finally, taking in account that
\beq
L_k(x,q)=1+\sum_{i=1}^k L_{k;i}(x,q),
\feq
we obtain that 
\beqn
\label{lea}
L_k(x,q)=1+\beta e_k^{\text {\tiny T}}{\bf A}_k^{-1}e_k=1+\alpha e_k^{\text {\tiny T}}{\bf A}_k^{-1}e_k+
\frac{xq}{1-x(1-q)}L_k(x,q)e_k^{\text {\tiny T}}{\bf A}_k^{-1}e_k.
\feqn
An application of Corollary~4.4 in \cite{isumms} then yields the following: 
\begin{lemma}
\label{thmm1a2}
For all $k\geq1,$
\begin{align*}
L_k(x,q)&=1-\frac{1+(k(t-1)-1)U_k(t)+U_{k-1}(t)}{q+2(q-1)(t-1)^2U_k(t) +q(k(t-1)-1)U_k(t)+qU_{k-1}(t)},
\end{align*}
where $t=\frac{1-x(1-q)}{2x(1-q)},$ as long as $t\not\in\{-1,1\}.$ 
\end{lemma}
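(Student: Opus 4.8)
The plan is to solve the fixed-point equation \eqref{lea} for $L_k(x,q)$, which requires only knowing the scalar quantity $c_k := e_k^{\text{\tiny T}}{\bf A}_k^{-1}e_k$, the sum of all entries of the inverse of the tridiagonal matrix ${\bf A}_k$. Once $c_k$ is known, \eqref{lea} reads $L_k = 1 + \alpha c_k + \frac{xq}{1-x(1-q)}\,c_k\, L_k$, so that
\[
L_k(x,q) = \frac{1+\alpha c_k}{1 - \frac{xq}{1-x(1-q)}c_k},
\]
and the remaining work is purely algebraic: substitute the closed form for $c_k$, clear denominators, and rewrite everything in terms of the variable $t = \frac{1-x(1-q)}{2x(1-q)}$, which is designed so that $\alpha = \frac{1}{2t}$ and hence ${\bf A}_k$ becomes $\frac{1}{2t}$ times the standard tridiagonal matrix $2t\,I - (\text{off-diagonal ones})$ whose determinants are exactly the Chebyshev polynomials $U_j(t)$.

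The key input is the evaluation of $c_k = e_k^{\text{\tiny T}}{\bf A}_k^{-1}e_k$, which is precisely what Corollary~4.4 of \cite{isumms} supplies: a formula for the sum of all entries of the inverse of a tridiagonal matrix of this type in terms of Chebyshev polynomials $U_k(t)$ and $U_{k-1}(t)$. I would first record that since $\det {\bf A}_k = (2t)^{-k}U_k(t)$ (up to the scaling), the matrix is invertible exactly when $U_k(t)\neq 0$, and in particular for $t\notin\{-1,1\}$ in a neighborhood of the relevant power-series regime; then I would quote the cited corollary to get $c_k$ explicitly. Plugging this $c_k$ into the displayed formula for $L_k$ and simplifying — multiplying numerator and denominator by the common factor that converts $\alpha$, $\frac{xq}{1-x(1-q)}$, and $1-x(1-q)$ into expressions in $t$ (note $x(1-q) = \frac{1}{2t+1}$ and $1-x(1-q) = \frac{2t}{2t+1}$, while $\frac{xq}{1-x(1-q)} = q\cdot\frac{x}{1-x(1-q)}$ can be written using $q$ and $t$) — should collapse to the stated rational function in $U_k(t)$, $U_{k-1}(t)$, $t$, $q$, and $k$.

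The main obstacle is the bookkeeping in this final simplification: one has to carefully track how the parameter $q$ (which does not appear inside the Chebyshev polynomials, only through $t$ and through the explicit prefactors) enters the numerator and denominator, and verify that the terms $q + 2(q-1)(t-1)^2U_k(t) + q(k(t-1)-1)U_k(t) + qU_{k-1}(t)$ emerge correctly after clearing denominators. A useful sanity check along the way is to specialize to $q=1$ (no separations penalty), where $\cals_k$ is ignored and $L_k(x,1)$ must reduce to $\frac{1}{1-kx} = \sum_n k^n x^n$; in the $t$-variable, $q=1$ forces a degenerate limit $t\to\infty$, so instead I would more safely check $q=0$, which counts only staircase words and must reproduce the known generating function $h_{0,k}$ from \cite{mansour2}, and also verify the small cases $k=1,2$ against the elementary formulas $L_1 = \frac{1}{1-x}$, $L_2 = \frac{1}{1-2x}$ already recorded. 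These checks confirm the algebra without needing to re-derive \cite{isumms}.
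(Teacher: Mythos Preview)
Your proposal is correct and follows essentially the same approach as the paper: the paper derives \eqref{lea} and then states that ``an application of Corollary~4.4 in \cite{isumms}'' yields the lemma, which is precisely your plan of computing $c_k=e_k^{\text{\tiny T}}{\bf A}_k^{-1}e_k$ via that corollary and substituting into the fixed-point equation. Your write-up simply makes explicit the change of variables $\alpha=\frac{1}{2t}$ and the algebraic simplification that the paper leaves implicit.
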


\begin{example}
\label{eqkk3}
Lemma~\ref{thmm1a2} with $k=3$ gives
\begin{align*}
L_3(x,q)&=\frac{1+x-qx}{1-(2+q)x-(1-q)x^2}
\\
&
=\frac{1+x}{1-2x-x^2}+\sum_{r\geq1}\frac{2x^{r+1}(1-x)^{r-1}}{(1-2x-x^2)^{r+1}}q^r.
\end{align*}
\end{example}

Assume now that $k\geq4.$ Setting $\veps=1-q$ and expanding the numerator and denominator of
the generating function $L_k(x,1-\veps)$ at $\veps=0$ with the help of the identity
\begin{align}
\label{cheb1}
U_n(x)=\sum_{j\geq0}(-1)^j\binom{n-j}{j}(2x)^{n-2j},
\end{align}
we see that 
\beqn
\label{Leps}
L_k(x,1-\veps)=\frac{\sum_{j=0}^2\alpha_jx^j\veps^{j}+O(\veps^3)}{\sum_{j=0}^2\beta_jx^j\veps^j+O(\veps^3)},
\feqn
where
\begin{align*}
\alpha_0&=k+2,\qquad \alpha_1=-(k+3)(k+4),\qquad \alpha_2=\frac{1}{2}(k+3)(k^2+6k+12),\\
\beta_0&=k+2+6\veps,\qquad
\beta_1=-(k^2+7k+12)+\frac{1}{2}(k^2+k-18)\veps,\\
\beta_2&=\frac{1}{2}(k+3)(k^2+6k+12)-\frac{1}{3}k(k^2+6k+11)\veps.
\end{align*}
It is not hard to verify that \eqref{Leps} is equivalent to the following result:
	
\begin{lemma}
\label{Lexpansion}
The power series expansion of the generating function $L_k(x,q)$ at $q=1$ is given by
\beq
L_k(x,q) &=& \frac{a_k(x)}{1-kx}+b_k(x)\frac{x^2(q-1)}{(1-kx)^2}
+c_k(x)\frac{x^3(q-1)^2}{(1-kx)^3}+O((q-1)^3),
\feq
where
\beq
a_k(x)&=&1,\qquad b_k(x)=(k-2)(k-1),\qquad  c_k(x)=-(k-2)(2x-k^2+4k-5).
\feq
\end{lemma}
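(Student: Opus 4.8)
The plan is to derive the expansion directly from the rational expression \eqref{Leps} rather than re-expanding the Chebyshev closed form of Lemma~\ref{thmm1a2}. First I would record that \eqref{Leps} exhibits $L_k(x,1-\veps)$ as a ratio of two polynomials in $\veps$ with coefficients that are themselves polynomials in $x$; the displayed values of $\alpha_0,\alpha_1,\alpha_2$ and $\beta_0,\beta_1,\beta_2$ are obtained by inserting \eqref{cheb1} into the formula of Lemma~\ref{thmm1a2}, substituting $t=\frac{1-x(1-q)}{2x(1-q)}=\frac{\veps^{-1}-x}{2x}$, clearing the powers of $x$ and $\veps$ from the denominators created by $t$, and collecting terms up to order $\veps^2$. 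This is the one genuinely computational step, and it is exactly the ``not hard to verify'' passage the text refers to; I would present it as a guided calculation, perhaps isolating the leading behaviour $t\sim\frac{1}{2x\veps}$ so that $U_n(t)\sim (2t)^n\sim (x\veps)^{-n}$, which makes the matching of powers transparent.

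Next, with \eqref{Leps} in hand, I would perform the division of the two quadratics-in-$\veps$ as a formal power series in $\veps=q-1$. Write the numerator as $N(\veps)=\alpha_0 + \alpha_1 x\veps + \alpha_2 x^2\veps^2 + O(\veps^3)$ and the denominator as $D(\veps)=\beta_0 + \beta_1 x\veps + \beta_2 x^2\veps^2 + O(\veps^3)$, keeping in mind that $\alpha_0=\beta_0-6\veps$ is \emph{not} constant — more precisely $\beta_0$, $\beta_1$, $\beta_2$ each carry an extra $O(\veps)$ correction, so all six coefficients must be treated as first-order polynomials in $\veps$ and re-expanded. I would compute $N/D = \frac{1}{1-kx}\big(c_0 + c_1\veps + c_2\veps^2\big) + O(\veps^3)$ by factoring out $\beta_0/(k+2)$ and using the geometric series $ (1+u)^{-1}=1-u+u^2-\cdots$ with $u=(D/\beta_0-1)$; the factor $1-kx$ should emerge from the combination $\beta_0 + \beta_1 x + \beta_2 x^2\big|_{\veps=0}$-type cancellations, which is the structural reason the poles of $L_k$ at $q=1$ sit exactly at $x=1/k$. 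Then I would read off $c_0$, $c_1$, $c_2$ and check that they reproduce $a_k(x)=1$, $b_k(x)x^2=(k-2)(k-1)x^2$ after dividing by the appropriate power of $1-kx$, and likewise $c_k(x)x^3=-(k-2)(2x-k^2+4k-5)x^3$ at second order.

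The main obstacle is bookkeeping rather than ideas: one must carry the $O(\veps)$ tails of $\beta_0,\beta_1,\beta_2$ correctly through the quotient so that the second-order coefficient $c_k(x)$ comes out right, and one must verify that the spurious higher powers of $x$ introduced when clearing denominators in the substitution $t=\frac{\veps^{-1}-x}{2x}$ all cancel, leaving the clean form $\frac{1}{1-kx}+\cdots$. As a sanity check I would specialize to $k=3$ and confirm agreement with Example~\ref{eqkk3}: there $L_3(x,q)=\frac{1+x}{1-2x-x^2}+\sum_{r\ge1}\frac{2x^{r+1}(1-x)^{r-1}}{(1-2x-x^2)^{r+1}}q^r$, whose expansion about $q=1$ must match $\frac{1}{1-3x}+2\frac{x^2(q-1)}{(1-3x)^2}-(2x-2)\frac{x^3(q-1)^2}{(1-3x)^3}+O((q-1)^3)$, i.e. $b_3=2$ and $c_3(x)=-(2x-2)$, which is consistent with the stated formulas at $k=3$. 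A second consistency check is that the $q$-linear coefficient must integrate against the moment formula: $E(\cals_k(n))=[x^n]\big(b_k(x)x^2/(1-kx)^2\big)\big/[x^n]\big(1/(1-kx)\big)=(k-2)(k-1)\cdot\frac{n-1}{k}\cdot\frac{1}{k}$, matching Theorem~\ref{Lstair-avg}-(i); this is in fact how the lemma will be used downstream, so verifying it closes the loop.
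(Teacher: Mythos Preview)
Your proposal follows exactly the route the paper takes: the paper derives \eqref{Leps} from Lemma~\ref{thmm1a2} via \eqref{cheb1} and then asserts that \eqref{Leps} ``is equivalent to'' Lemma~\ref{Lexpansion}, leaving the quotient-of-quadratics expansion in $\veps$ as the reader's exercise --- which is precisely the computation you spell out, including the consistency checks against Example~\ref{eqkk3} and Theorem~\ref{Lstair-avg}-(i). One small slip: the paper sets $\veps=1-q$, not $\veps=q-1$, so when you translate back to powers of $(q-1)$ the odd-order term picks up a sign; keep track of this so that $b_k(x)$ comes out with the right sign.
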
	
Using Lemma~\ref{Lexpansion} to evaluate $\frac{\partial^j}{\partial q^j}L_k(x,q)\mid_{q=1}$ for $j=1,2,$ we obtain that
\begin{align*}
E\big[\cals_k(n)\big] &= \frac{1}{k^n}[x^n]\frac{\partial}{\partial q}L_k(x,q)
\Big|_{q=1}= \frac{1}{k^n}[x^n]\frac{(k-2)(k-1)x^2}{(1-kx)^2}
\\
&=\frac{(k-2)(k-1)}{k^2}(n-1),
\\
E\big[\cals_k(n)(\cals_k(n)-1)\big] &= \frac{1}{k^n}[x^n]\frac{\partial^2}{\partial q^2}L_k(x,q)\Big|_{q=1}  = \frac{1}{k^n}[x^n]
\frac{2(k-2)x^3(k^2-4k+5-2x)}{(1-kx)^3}
\\
&=\frac{(n-2)(k-2)}{k^4}\big((n-1)k^3-4(n-1)k^2+5(n-1)k-2n+6\big).
\end{align*}
Thus,
\begin{align*}
\sigma^2\big(\cals_k(n)\big)&=E\big(\cals_k(n)(\cals_k(n)-1)\big)+E\big(\cals_k(n)\big)-\big[E\big(\cals_k(n)\big)\big]^2
\\
&= -\frac{4(k-2)}{k^4}+\frac{(k-2)(3k^2-5k+6)}{k^4}(n-1).
\end{align*}
This completes the proof of (i) and (ii) of Theorem \ref{Lstair-avg}.
\hfill\hfill\qed

\subsection{Proof of Theorem~\ref{Lstair-llaws}-(i),(ii)}
\label{mac3}	
Recall \eqref{random}. Without loss of generality we can assume that $\cals_k(n)=\cals_k(W_n).$
Thus, there exists a random sequence of \textit{maximal} $L$-staircase words $\big(\pi^{(n)}\big)_{n\in\nn}$
such that
\beq
W_n=\pi^{(1)}\cdots\pi^{(\cals_k(n))}\mu_n,
\feq
where the remainder $\mu_n$ is a prefix of $\pi^{(\cals_k(n)+1)}.$ Denote
by $u_n$ the last letter in the word $\pi^{(n)}$ and by $\ell_n$ the length of $\pi^{(n)}.$
For $n\in\nn,$ let
\beq
\Lambda_n=\sum_{i=1}^n \ell_i.
\feq
Thus, $\cals_k(n)$ can be characterized as the unique sequence of non-negative integers such that
\beqn
\label{mr}
\Lambda_{\cals_k(n)}<n\leq \Lambda_{\cals_k(n)+1}\qquad \forall\,n\in\nn.
\feqn
Equivalently, for any $n,m\in\nn$ we have
\beqn
\label{e3}
\{\cals_k(n)\geq m\}=\{\Lambda_m<n\}.
\feqn
Clearly, both $(u_i)_{i\in\nn}$ and $(u_i,\ell_i)_{i\in\nn}$
are Markov chains, the former is irreducible and aperiodic,
and the latter is formally a \emph{hidden Markov model} (in that the transition kernel of $(u_i,\ell_i)$ depends
only on the current value of the first component, the value of $(u_{i+1},\ell_{i+1})$ is independent of $\ell_i$ when $u_i$ is given).
Furthermore, the distribution tails of $\ell_i$ are exponential, namely
\beq
P(\ell_{i+1}\geq x\mid u_i=j)\leq \Big(\frac{k-2}{k}\Big)^{x-1}\qquad \forall\,i,x\in\nn,j\in[k].
\feq
Therefore \cite{xiac,bmc-book,clt}, there exists a strictly positive number $b>0$ such that  the distribution of the normalized sequence
$\frac{\Lambda_n-an}{b\sqrt{n}}$ converges, as $n\to\infty,$ to $\caln(0,1),$ where $a=\lim_{n\to\infty}\frac{\Lambda_n}{n}.$
Note that by the ergodic theorem, $a=E_{st}(\Lambda_1),$ where $E_{st}$ stands for the expectation in the stationary regime.
\par
The two-component random process $(u_n,\Lambda_n)_{n\in\nn}$ belongs to the class of \emph{Markov renewal processes} \cite{renewal5, renewal}.
Usual arguments of the renewal theory (see, for instance, (4.2) in \cite{agut}, Lemma~2.1.17 in \cite{notes}, or Section~11 in \cite{renewal}) show that \eqref{mr} implies that with probability one,
\beqn
\label{apsi}
a=\lim_{n\to\infty} \frac{n}{\cals_k(n)}=\lim_{n\to\infty} \frac{n}{E(\cals_k(n))}=
\frac{k^2}{(k-2)(k-1)},
\feqn
where the first identity is a direct implication of \eqref{mr}, the second one is an application of the bounded convergence theorem
to the sequence $\cals_k(n)/n,$ and the last one is due to the result in Theorem~\ref{Lstair-avg}-(i).
\par
The translation of the CLT for $\Lambda_n$ into a CLT for $\cals_k(n)$ is a standard duality (locations of the random walk versus first passage times) argument in the renewal theory of random walks on $\zz$ (which is especially simple in our case because the increments $\ell_n$ of the random walk $\Lambda_n$ are strictly positive), see, for instance, \cite[p.~344]{alili}, Theorem~1 in \cite{agut}, or Section~11 in \cite{renewal}.
Specifically, choose any $x\in \rr$ and, for $n\in\nn,$ let
\beq
\tau(n)=\lfloor a^{-1} n+a^{-3/2} b x \sqrt{n} \rfloor,
\feq
where $\lfloor  y \rfloor $ stands for $\max\{n\in\nn\mid n\leq y\},$ the integer part of $y.$ Note that $\tau(n)>0$ for all $n$ large enough.
It follows then from \eqref{e3} that, as $n\to\infty,$
\begin{align*}
P\Big(\frac{\cals_k(n)-a^{-1} n}{a^{-3/2}b \sqrt{n}}\geq x\Big)&=
P\big(\cals_k(n)\geq \tau(n)\big)=
P\big(\Lambda_{\tau(n)}\leq n \big)
\\
&
=
P\bigg(\frac{\Lambda_{\tau(n)}-(n+a^{-1/2}b x \sqrt{n})}{ b\sqrt{a^{-1} n+a^{-3/2}b x \sqrt{n}}}\leq \frac{n-(n+a^{-1/2}b x \sqrt{n})}{b\sqrt{a^{-1} n+a^{-3/2}b x \sqrt{n}}} \bigg)
\\
&
\sim \Phi(-x)=1-\Phi(x),
\end{align*}
where $\Phi(x)=\frac{1}{\sqrt{2\pi}}\int_{-\infty}^xe^{-\frac{x^2}{2}}\,dx $ is the distribution function of $\caln(0,1).$  Thus,
\beq
\lim_{n\to \infty} \call \bigg( \frac{\cals_k(n)-a^{-1} n}{a^{-3/2}b \sqrt{n}} \bigg) = \caln(0,1).
\feq
The proof of part (i) of the theorem is complete.
\hfill\hfill\qed
\begin{remark}
\label{cltr}
The proof shows that $\sigma_k=a^{-3/2}b.$ The limiting variance $b^2$ can be expressed in terms of the underlying Markov chain $(u_n,\ell_n)_{n\in\nn}$ as follows:
\beq
b^2=VAR_{st}(\ell_1)+2\sum_{j=2}^\infty COV_{st}(\ell_1,\ell_j),
\feq
where the subscript ``$st$" indicates that the covariances are calculated in the stationary regime.
For more information on the structure of the limiting variance and its alternative representations
see, for instance, Section~17.4.3 in \cite{bmc-book} and \cite{clt7, clt5}.
\end{remark}
\subsection{Proof of Theorem \ref{thpole1}-(i) and Corollary~\ref{ent_lls}-(i)}
\label{mac4}
We begin with the proof of Theorem~\ref{thpole1}-(i).
\\
$\mbox{}$
\\
\textit{Proof of Theorem \ref{thpole1}-(i).}
By Corollary~2.5 in \cite{mansour2}, we know that the result holds for $r=0,$ that is
\beqn
\label{hbo}
\lim_{n\to\infty} (h_{0,k}(n))^{1/n}=1+2\cos\Big(\frac{\pi}{k+1}\Big).
\feqn
Therefore, in order to complete the proof for general $r\geq 0$ it suffices to show the following:
\begin{proposition}
\label{procr2}
For all $r\geq0$ and $k\geq 3,$ $\lim_{n\to\infty} (h_{r,k}(n))^{1/n}$ exists and its value is independent of $r.$
\end{proposition}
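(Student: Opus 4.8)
The plan is to squeeze $h_{r,k}(n)$ between two quantities that both grow at the exponential rate $\mu:=1+2\cos\!\big(\frac{\pi}{k+1}\big)$, independently of $r$; since $h_{r,k}(n)\ge 0$ this gives $\lim_{n\to\infty}(h_{r,k}(n))^{1/n}=\mu$, which is exactly the assertion (and, via \eqref{hbo}, it is consistent with Corollary~2.5 of \cite{mansour2} for $r=0$). Let $M=(M_{ij})_{1\le i,j\le k}$ be the transfer matrix of staircase words, $M_{ij}=1$ if $|i-j|\le1$ and $M_{ij}=0$ otherwise, so that the number of staircase words of length $\ell\ge1$ from $a$ to $b$ equals $(M^{\ell-1})_{ab}$ and, in particular, $h_{0,k}(\ell)=e_k^{\mathrm T}M^{\ell-1}e_k$ with $e_k=(1,\dots,1)\in\rr^k$ as before. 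The matrix $M$ is nonnegative, irreducible (the alphabet graph on $[k]$ is connected) and aperiodic (its diagonal entries equal $1$), so by the Perron--Frobenius theorem its spectral radius is the simple, strictly dominant eigenvalue $\mu=1+2\cos\!\big(\frac{\pi}{k+1}\big)$ of $M$, and $M$ has left and right eigenvectors with strictly positive entries. Hence there are positive constants $c$ and $C$, depending only on $k$, such that
\[
h_{0,k}(\ell)\le C\mu^{\ell}\quad(\ell\ge1)\qquad\text{and}\qquad (e_k^{\mathrm T}M^{m})_1\ge c\,\mu^{m}\quad\text{for all large }m.
\]

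For the upper bound, recall that every $\pi\in[k]^n$ with $\cals_k(\pi)=r$ has a unique decomposition $\pi=\pi^{(1)}\pi^{(2)}\cdots\pi^{(r+1)}$ into maximal staircase subwords. Dropping the maximality requirement only enlarges the count, so $\pi\mapsto(\pi^{(1)},\dots,\pi^{(r+1)})$ embeds $\{\pi\in[k]^n:\cals_k(\pi)=r\}$ into the set of ordered $(r+1)$-tuples of nonempty staircase words of total length $n$. Therefore
\[
h_{r,k}(n)\le\sum_{\substack{\ell_1+\cdots+\ell_{r+1}=n\\ \ell_i\ge 1}}\ \prod_{i=1}^{r+1}h_{0,k}(\ell_i)\le C^{\,r+1}\mu^{n}\binom{n-1}{r},
\]
and since $\binom{n-1}{r}$ grows only polynomially in $n$ for fixed $r$, this yields $\limsup_{n\to\infty}(h_{r,k}(n))^{1/n}\le\mu$.

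For the lower bound, fix $r\ge1$ (for $r=0$ the statement is \eqref{hbo}) and, for $n>r$, take any staircase word of length $n-r$ whose last letter is $1$ and append to it the alternating block $3\,1\,3\,1\cdots$ of length $r$. Every appended letter differs from its predecessor by exactly $2$, and the failure of the neighbour relation in the linear alphabet is precisely the condition $|i-j|\ge2$ --- with no exception at the ends, since $|1-k|=k-1\ge2$ --- so the word obtained is a concatenation of exactly $r+1$ maximal staircase blocks (the initial staircase followed by $r$ singletons) and hence has separation number $r$. Distinct initial staircases produce distinct words, so $h_{r,k}(n)\ge (e_k^{\mathrm T}M^{\,n-r-1})_1\ge c\,\mu^{\,n-r-1}$ for all large $n$, and thus $\liminf_{n\to\infty}(h_{r,k}(n))^{1/n}\ge\mu$. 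Combining the two estimates proves that $\lim_{n\to\infty}(h_{r,k}(n))^{1/n}=\mu$ for every $r\ge0$, in particular independent of $r$. The points needing care are combinatorial rather than analytic: checking that the block decomposition gives an honest injection (so that the convolution bound is legitimate) and that the appended singletons in the lower-bound construction are genuinely maximal blocks; once these are in place, the conclusion is just the Perron--Frobenius estimate for the fixed, explicit matrix $M$.
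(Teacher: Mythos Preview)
Your proof is correct and follows essentially the same squeeze argument as the paper: bound $h_{r,k}(n)$ above by a convolution of staircase counts and below by an explicit construction, both growing at rate $\mu=1+2\cos\frac{\pi}{k+1}$. The paper does this by induction on $r$ (append one non-neighbor letter to pass from $r-1$ to $r$ separations for the lower bound; peel off the first maximal block for the upper bound), whereas you handle all $r$ at once via the full $(r+1)$-fold convolution and by appending an alternating $31\cdots$ block of length $r$. A small bonus of your construction is that it works uniformly for $k\ge 3$: the paper's inductive lower bound needs every letter to have a non-neighbor, which fails for the letter $2$ when $k=3$, so the paper treats $k=3$ separately via the explicit generating function of Example~\ref{eqkk3}. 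Your use of Perron--Frobenius to control $(e_k^{\mathrm T}M^m)_1$ is the natural substitute for the inductive hypothesis and is unproblematic since $M$ is primitive.
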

\begin{proof}
By Example \ref{eqkk3}, the proposition holds for $k=3.$ Thus, from now on we will assume that $k\geq4.$
We proceed with the proof by induction on $r.$ Denote
\beq
\xi_k=1+2\cos\Big(\frac{\pi}{k+1}\Big).
\feq
Assume now that $\lim_{n\to\infty} (h_{r-1,k}(n))^{1/n}=\xi_k$ for some $r\in\nn.$
For $n\in\nn$ and integer $t \geq 0,$ define
\beq
\Theta_{t,k}(n) = \big\{ \pi \in [k]^n \,:\, \cals_k(\pi) = t \big\}.
\feq
Let $w_1w_2\cdots w_n$ be any word in the set $\Theta_{r-1,k}(n)$ and choose a letter $v\in[k]$ such that $|v-w_n|>1.$ Then
$h_{r-1,k}(n)\leq h_{r,k}(n+1),$ which implies
\beq
\xi_k=\lim_n(h_{r-1,k}(n))^{1/n}\leq \liminf_n (h_{r,k}(n))^{1/n}.
\feq
Let now $w=w_1w_2\cdots w_n\in \Theta_{r,k}(n)$ and $i$ be minimal index such that $|w_i-w_{i+1}|>1.$ Then
$w_1w_2\cdots w_i\in \Theta_{0,k}(i)$ and $w_{i+1}w_{i+2}\cdots w_n\in \Theta_{r-1,k}(n-i).$ Thus,
\begin{align}\label{eqhrfi}
h_{r,k}(n)\leq \sum_{i=0}^n h_{0,k}(i)h_{r-1,k}(n-i).
\end{align}
By \eqref{hbo} and the induction hypothesis, for every $\veps>0$ there exists a constant $A_\veps>0$ such that for all $n\in \nn$ we have
\beq
h_{0,k}(n) \leq A_\veps(\xi_k+\veps)^n\quad \mbox{\rm and }\quad h_{r-1,k}(n)\leq A_\veps(\xi_k+\veps)^n,
\feq
which, by virtue of \eqref{eqhrfi}, leads to
\beq
h_{r,k}(n)\leq nA_\veps^2 (\xi_k+\veps)^n.
\feq
Since $\veps>0$ is an arbitrary constant, this implies that $\limsup_{n\to\infty}\big(h_{r,k}(n)\big)^{1/n}\leq \xi_k.$ Combining the liminf and limsup inequalities we get
\beq
\xi_k\leq\liminf_n(h_{r,k}(n))^{1/n}\leq\limsup_n(h_{r,k}(n))^{1/n}\leq\xi_k,
\feq
which implies the claim of the proposition.
\end{proof}
\begin{remark}
\label{hbor}
The key limit identity \eqref{hbo} is obtained in \cite{mansour2} by analytical methods. We will now outline a short probabilistic argument
leading to this result (note however that Corollary~2.5 in \cite{mansour2} gives more information about the asymptotic behavior of $h_{0,k}(n)$ than
the limit in \eqref{hbo} alone). Consider a discrete-time Markov chain $(Y_\ell)_{\ell\geq 0}$ on the state space $[k]\cup\{0\}$ with transition kernel
\beq
P_{ij}:=P(Y_{\ell+1}=j\,|\,Y_\ell=i)=
\left\{
\begin{array}{ll}
\frac{1}{k}&\mbox{\rm if}~|i-j|\leq 1~\mbox{\rm and}~i,j\neq 0,
\\
[4pt]
\frac{k-2}{k}&\mbox{\rm if}~i\in\{1,k\}~\mbox{\rm and}~j=0,
\\
[4pt]
\frac{k-3}{k}&\mbox{\rm if}~i\not\in\{1,k\}~\mbox{\rm and}~j=0,
\\
[4pt]
1&\mbox{\rm if}~i=j=0,
\\
[4pt]
0&\mbox{\rm otherwise}.
\end{array}
\right.
\feq
Thus $0$ is the absorbing state of the Markov chain, which is otherwise irreducible and aperiodic. We will assume that the initial state $Y_0$ is chosen uniformly from
the set $[k].$ In terms of this Markov chain,
\beq
h_{0,k}(n)=k^n P(\cals_{n,k}=0)=k^nP(T_0\geq n),
\feq
where $T_0=\inf\{\ell\in\nn\,:\, Y_\ell=0\}$ is the first hitting time of the absorption state zero. Thus
\beq
h_{0,k}(n)=k^n\cdot \frac{1}{k}e_k^{\text {\tiny T}}Q^{n-1}e_k=k^{n-1}\cdot e_k^{\text {\tiny T}}Q^{n-1}e_k,
\feq
where $Q$ is $k\times k$ matrix which is obtained from $P$ by removing the row and column corresponding to the absorption state zero,
$e_k$ is a $k$-vector with all entries equal to one,
and $\frac{1}{k}$ in the formula stands for the initial distribution.
Therefore, $\lim_{n\to\infty}\big( h_{0,k}(n)\big)^{1/n}=\lambda_k,$ where $\lambda_k$ is the Perron-Frobenius eigenvalue of $kQ.$ It is well-known that $\lambda_k=1+2\cos\big(\frac{\pi}{k+1}\big)$ (see, for instance, \cite{nash} or \cite[p.~239]{rwb}). We remark in passing that $kQ=-\frac{1}{\alpha}({\bf A}_k-I),$ where ${\bf A}_k$ is the matrix that appears in \eqref{eqM0}. This observation provides a direct link between the analytic (generating function) and probabilistic approaches to the proof of Theorem~\ref{thpole1}-(i).
\end{remark}
We now turn to the proof of Corollary~\ref{ent_lls}-(i).
\begin{proof}[Proof of Corollary~\ref{ent_lls}-(i)]
We have
\beq
H^{(s)}_k(n)&=&-\sum_{r\geq 0} P^{(s)}_{k,n}(r)\log P^{(s)}_{k,n}(r)=-\frac{1}{k^n}\sum_{r\geq 0} h_{r,k}(n)\big(\log h_{r,k}(n)-n\log k\big)
\\
&=& n\log k-\sum_{r\geq 0} P^{(s)}_{k,n}(r) \log h_{r,k}(n).
\feq
Thus,
\beq
\frac{H^{(s)}_k(n)}{n} =\log k-\sum_{r\geq 0} P^{(s)}_{k,n}(r)\log \big[ h_{r,k}(n) \big]^{\frac{1}{n}}.
\feq
An application of the bounded convergence theorem (notice that $1\leq h_{r,k}(n)\leq k^n$)
exploiting the result in Theorem~\ref{thpole1}-(i) finishes the proof.
\end{proof}

\section{Longest L-staircase subwords}
\label{lolik}	
The section is devoted to the proof of parts (iii) and (iv) of Theorem~\ref{Lstair-avg}.
\par	
Let $L_{n,k}(q)$ be the generating function for the number of $k$-ary words of length $n$ according to the statistic $\cali_k.$ That is,
\beq
L_{n,k}(q)=\sum_{\pi\in [k]^n}q^{\cali_k(\pi)}=\sum_{r=1}^n g_{r,k}(n)q^n, \qquad q\in\cc.
\feq
In addition, let $L_{n,k;i_1i_2\cdots i_s}(q)$ be the generating function for the number of $k$-ary words $\pi=i_1i_2\cdots i_s\pi'$ of length $n$
according to the statistic $\cali_k,$ that is
\beq
L_{n,k;i_1i_2\cdots i_s}(q)=\sum_{\pi=i_1i_2\cdots i_s\pi'\in [k]^n}q^{\cali_k(\pi)}, \qquad q\in\cc.
\feq
Similarly to \eqref{Lk}, for suitable $q,x\in\cc,$ let
\beq
L_{k}(x,q)=\sum_{n\geq0}L_{n,k}(q)x^n\qquad \mbox{\rm and }\qquad L_{k;i_1i_2\cdots i_s}(x,q)=\sum_{n\geq0}L_{n,k;i_1i_2\cdots i_s}(q)x^n.
\feq
It is easy to verify that $L_1(x,q)=\frac{1}{1-xq}$ and $L_2(x,q)=\frac{1}{1-2xq}.$ Hence, from now on we will assume that $k\geq3.$
We first observe that
\begin{align*}
L_{k;i}(x,q)&=xq+\sum_{j=1}^k L_{k;ij}(x,q)\\
&=xq+xq(L_{k;i-1}(x,q)+L_{k;i}(x,q)+L_{k;i+1}(x,q))+xq\sum_{j\in [k]\backslash \{i-1,i,i+1\}} L_{k;j}(x,1),
\end{align*}
where $L_{k;0}(x,q)$ and $L_{k,k+1}(x,q)$ are defined as $0.$ Since $L_{k;j}(x,1)=\frac{x}{1-kx},$ we obtain that
the following holds for all $i=1,2,\ldots,k:$
\begin{align}
\nonumber
L_{k;i}(x,q)&=\alpha'\big(L_{k;i-1}(x,q)+L_{k;i+1}(x,q)\big)
\\
\label{eqMN1}
&
\qquad
+\alpha'+(k-2)\beta'\delta_{i=1,k}+(k-3)\beta'\delta_{2\leq i\leq k-1},
\end{align}
where
\beq
\alpha'=\frac{xq}{1-xq}, \qquad
\beta'=\frac{x^2q}{(1-xq)(1-kx)},
\feq
and $\delta_X$ for a statement $X$ is defined as $1$ whenever $X$ holds true and $0$ otherwise.
\par 	
Using argument similar to those we employed in the previous section, we can write \eqref{eqMN1} in terms of a symmetric tridiagonal matrix, whose inverse yields
\begin{align}
\nonumber
& L_k(x,q)
\\
&
\qquad
=1+\frac{\alpha'+(k-3)\beta'}{\alpha'U_k(t)}\bigg(\sum_{i=0}^{k-1} U_i(t)U_{k-1-i}(t)
+2\sum_{i=0}^{k-2}\sum_{j=0}^{k-2-i}U_i(t)U_j(t)\bigg)
+\frac{2\beta'}{\alpha'U_k(t)}\sum_{i=0}^{k-1}U_i(t) \nonumber
\\
&
\qquad
= 1+\frac{\alpha'+(k-3)\beta'}{2(t-1)^2\alpha'U_k(t)}
\big(\big(k(t-1)-1\big)U_k(t)+U_{k-1}(t)+1\big) \nonumber
\\
&
\qquad \qquad +\frac{\beta'}{(t-1)\alpha'U_k(t)}\big(U_k(t)-U_{k-1}(t)-1\big). \label{MKxq}
\end{align}
It follows from \eqref{MKxq} that for any $k\geq 3,$
\begin{align*}
&
\frac{\partial}{\partial q}L_k(x,q)\mid_{q=1}
\\
&
\quad
=\frac{2x^2}{(1-kx)(1-3x)^2U_k\big(\frac{1-x}{2x}\big)}
+\frac{2x^2U_{k-1}\big(\frac{1-x}{2x}\big)}{(1-kx)(1-3x)^2U_k\big(\frac{1-x}{2x}\big)} -\frac{x(3kx-k+2x)} {(1-kx)(1-3x)^2}.
\end{align*}
By using the formula $\frac{d}{dx}U_n(x)=\frac{(k+1)T_{k+1}(x)-xU_k(x)}{x^2-1},$
where $T_k(x)$ is the $k$-th Chebyshev polynomial of the first kind, we obtain:
\begin{align*}
&\frac{\partial^2}{\partial q^2}L_k(x,q)\mid_{q=1} =\frac{-2(9kx-3k+6x+2)x^2}{(1-kx)(1-3x)^3}
+
\frac{4x^3(3x+5)}{(1-3x)^3(1-kx)(1+x)U_k\big(\frac{1-x}{2x}\big)}
\\
&
\qquad
\qquad
+\frac{4(1+3x)x^2U_{k-1}\big(\frac{1-x}{2x}\big)}{(1-kx)(1-3x)^2U_k\big(\frac{1-x}{2x}\big)}
+\frac{8(k+1)x^3\big(U_{k-1}\big(\frac{1-x}{2x}\big)+1\big)T_{k+1}
\left(\frac{1-x}{2x}\right)}{(1-kx)(1-3x)^3(1+x)U_k^2\big(\frac{1-x}{2x}\big)}
\\ &
\qquad
\qquad
-\frac{8kx^3T_k\big(\frac{1-x}{2x}\big)}{(1-kx)(1-3x)^3(1+x)U_k\big(\frac{1-x}{2x}\big)}.
\end{align*}
For $k\geq4,$ this implies that $E\big(\cali_k(n)\big)\sim e_1$ and $E\big(\cali_k(n)(\cali_k(n)-1)\big)\sim e_2,$ where
\beqn
\label{e1}
e_1(k)=\frac{2}{(k-3)^2U_k\big(\frac{k-1}{2}\big)}
+\frac{2U_{k-1}\big(\frac{k-1}{2}\big)}{(k-3)^2U_k\big(\frac{k-1}{2}\big)} +\frac{(k^2-3k-2)}{(k-3)^2}
\feqn
and
\begin{align}
\nonumber
e_2(k)&=\frac{-2(6+11k-3k^2)}{(k-3)^3} +\frac{4(3+5k)}{(k-3)^3(k+1)U_k\big(\frac{k-1}{2}\big)}
+\frac{4(k+3)U_{k-1}\big(\frac{k-1}{2}\big)}{(k-3)^2U_k\big(\frac{k-1}{2}\big)}
\\
\label{e2}	
&\qquad +\frac{8k\big(U_{k-1}\big(\frac{k-1}{2}\big)+1\big)T_{k+1}
\big(\frac{k-1}{2}\big)}{(k-3)^3U_k^2\big(\frac{k-1}{2}\big)}
-\frac{8k^2T_k\big(\frac{k-1}{2}\big)}{(k-3)^3(k+1)U_k\big(\frac{k-1}{2}\big)}.
\end{align}
In particular, $\sigma^2\big(\cali_k(n)\big)\sim e_2+e_1-e_1^2.$ The proof is complete. \hfill \hfill\qed

\section{Longest L-staircase subsequences}
\label{ls}
This section is devoted to the proof of the results for longest $L$-staircase subsequences.
Section~\ref{prooft} contains the proof of parts (v) and (vi) of Theorem~\ref{Lstair-avg},
Section~\ref{mac1} includes the proof of parts -(iii) and (iv) of Theorem~\ref{Lstair-llaws},
and the proof part (ii) of Theorem~\ref{thpole1} is given in Section~\ref{prf}.
\subsection{Proof of Theorem~\ref{Lstair-avg}-(v), (vi)}
\label{prooft}
Let $L_{n,k}(q)$ be the generating function of the number of $k$-ary words of length $n,$ according to the statistic $\calj_k.$ That is,
\beq
L_{n,k}(q)=\sum_{\pi\in [k]^n}q^{\calj_k(\pi)}=\sum_{r=1}^n f_{r,k}(n)q^n, \qquad q\in\cc.
\feq
In addition, we define $L_{n,k;l_1l_2\cdots l_s}(q)$ to be the generating function of the number of $k$-ary words $\pi= l_1l_2\cdots l_s\pi'$ of length $n$ according to the statistic $\calj_k.$ That is,
\beq
L_{n,k;l_1l_2\cdots l_s}(q)=\sum_{\pi=l_1l_2\cdots l_s\pi'\in [k]^n}q^{\calj_k(\pi)}, \qquad q\in\cc.
\feq
Next, similarly to \eqref{Lk}, for suitable $q,x\in\cc,$ we set
\beqn \label{llLk}
L_{k}(x,q)=\sum_{n\geq0}L_{n,k}(q)x^n
\qquad\mbox{\rm and} \qquad
L_{k;l_1l_2\cdots l_s}(x,q)=\sum_{n\geq0}L_{n,k;l_1l_2\cdots l_s}(q)x^n.
\feqn
It is straightforward to verify that $L_1(x,q)=\frac{1}{1-xq}$ and $L_2(x,q)=\frac{1}{1-2xq}.$
Hence, in what follows we will assume that $k\geq3.$  By \eqref{llLk}, we have
\beq
L_{k;i}(x,q)&=&xq+xq(L_{k;i-1}(x,q)+L_{k;i}(x,q)+L_{k;i+1}(x,q))+x\sum_{j\in[k]\backslash \{i-1,i,i+1\}}L_{k;i}(x,q)\\ &=&xq+xq(L_{k;i-1}(x,q)+L_{k;i}(x,q)+L_{k;i+1}(x,q))+(k-3)xL_{k;i}(x,q),
\feq
for $i=2,\ldots,k-1,$ and
\beq
L_{k;1}(x,q)&=&xq+xq(L_{k;1}(x,q)+L_{k;2}(x,q))+x\sum_{j\in [k]\backslash \{1,2\}} L_{k;1}(x,q)\\ &=&xq+xq(L_{k;1}(x,q)+L_{k;2}(x,q))+(k-2)xL_{k;1}(x,q),
\\
L_{k;k}(x,q)&=&xq+xq(L_{k;k}(x,q)+L_{k;k-1}(x,q))+x\sum_{j\in [k]\backslash \{k-1,k\}} L_{k;k}(x,q)\\ &=&xq+xq(L_{k;k}(x,q)+L_{k;k-1}(x,q))+(k-2)xL_{k;k}(x,q).
\feq
Let
\beq
\alpha=\frac{xq}{1-(k-2+q)x} \quad \text{and} \quad  \beta=\frac{xq}{1-(k-3+q)x}.
\feq
Suppose that $xq\neq 0,$ so that $\alpha\beta \neq 0.$ 
We define a tridiagonal matrix ${\bf B}_k=(a_{ij})_{1\leq i,j\leq k}$ by setting
\beq
b_{ij}=
\left\{
\begin{array}{ll}
1/\alpha &~\mbox{\rm if}~ i=j=1~\mbox{\rm or}~i=j=k,\\
1/\beta &~\mbox{\rm if}~ i=j~\mbox{\rm and}~ i\not \in\{1,k\},\\
-1&~\mbox{\rm if}~|i-j|=1,\\
0&~\mbox{otherwise}.
\end{array}
\right.
\feq
In the matrix notation, we have
\begin{align}\label{lleqM0}
{\bf B}_k
\left(
\begin{array}{l}L_{k;1}(x,q)
\\
\vdots\\
L_{k;k}(x,q)\end{array}\right)=
e_k,
\end{align}
where $e_k\in\rr^k$ is a $k$-vector with all entries equal to zero. Similarly to \eqref{lea}, 
we obtain that 
\beq
L_k(x,q)=1+\sum_{i=1}^kL_{k,i}(x,q)=
1+e_k^{\text {\tiny T}}{\bf B}_k^{-1}e_k.
\feq
It follows from Corollaries~4.2-4.4 in \cite{isumms} (apply first Corollary~4.4 in \cite{isumms} to the $(k-2)\times (k-2)$ 
matrix obtained by deleting the first and the last rows and columns in ${\bf B}$, alternatively we could use Theorem~2 in \cite{tan}) 
that 
\beqn
\label{new}
L_k(x,q)&=&\frac{\beta\Big(k-2-\frac{2\beta\big(1-\frac{1+U_{k-3}(1/2\beta)}{U_{k-2}(1/2\beta)}\big)}{1-2\beta}\Big)}{1-2\beta}+
2\alpha\Big(1-\frac{U_{k-3}(1/2\beta)}{U_{k-2}(1/2\beta)}\Big).
\feqn  
Differentiating $L_k(x,q)$ with respect to $q$ at $q=1$ we obtain that
\beq
\frac{\partial}{\partial q}L_k(x,q) \Big|_{q=1}=\frac{x(k-(k-2)(k-1)x)}{(1-kx)^2},
\feq
which leads to $E(\calj_k(n))=1+\frac{3k-2}{k^2}(n-1).$
Also, by differentiating $L_k(x,q)$ twice with respect to $q$ and then substituting $q=1,$ we obtain that there exist two constants $a_k$ and $b_k$ and a function $\witi L_k(x)$ analytic at $x=1/k$ such that
\beq
\frac{\partial^2}{\partial q^2}L_k(x,q)\Big|_{q=1}=\frac{2(3k-2)^2}{k^4(1-kx)^3}+\frac{a_k}{(1-kx)^2}+\frac{b_k}{1-kx}+\witi L_k(x),
\feq
where
\beqn
a_k&=&\lim_{x\to 1/k}\bigg(\frac{\partial^2}{\partial q^2}L_k(x,q)\Big|_{q=1}-\frac{2(3k-2)^2}{k^4(1-kx)^3}\bigg)(1-kx)^2,
\label{a_n_lab}
\\
b_k&=&\lim_{x\to 1/k}\bigg(\frac{\partial^2}{\partial q^2}L_k(x,q)\Big|_{q=1}-\frac{2(3k-2)^2}{k^4(1-kx)^3}-\frac{a_k}{(1-kx)^2}\bigg)(1-kx).
\nonumber
\feqn
Thus, by using the method of residuals \cite{analcombin} to calculate the coefficient in front of $x^2,$ we obtain that
\beq
E\big(\calj_k(n)(\calj_k(n)-1)\big)=\frac{(3k-2)^2(n+1)(n+2)}{k^4}+a_k(n+1)+O(1),
\feq
and
\beq
\sigma^2\big(\calj_k(n)\big)&=&
E\big(\calj_k(n)(\calj_k(n)-1)\big)+E(\calj_k(n))-E\big[\big(\calj_k(n)\big)\big]^2
\\
&=&
\Big(a_k-\frac{(3k-2)(k^2-15k+10)}{k^4}\Big)n+O(1).
\feq
The proof is complete. \hfill\hfill\qed
\subsection{Proof of Theorem~\ref{Lstair-llaws}-(iii),(iv)}
\label{mac1}
Recall \eqref{random}. Without loss of generality we can assume that $\calj_k(n)=\calj_k(W_n).$
Let $T(1)=1$ and, recursively, for $\ell\in\nn,$
\beq
T(\ell+1)=\inf\{t\in\nn\,:\,|w_t-w_{T(\ell)}|\leq1\}.
\feq
Denote $X_\ell=w_{T(\ell)}.$ Then, for all $n\in\nn,$  $X_1\cdots X_{\calj_k(n)}$ is the longest $L$-staircase subsequence of $W_n,$
and $\calj_k(n)$ is the unique integer such that
\beqn
\label{tn}
T\big(\calj_k(n)\big)\leq n <T\big(\calj_k(n)+1\big).
\feqn
Let $T(0)=0$ and $\tau_n=T_n-T_{n-1}$ for $n\in\nn.$ Then $(X_n)_{n\in\nn}$ is a reflected nearest-neighbor
symmetric random walk on $[k],$ and $(X_n,\tau_n)_{n\in\nn}$ is a Markov chain with transition kernel
\beq
P\big(X_{n+1}=y,\tau_n=t\big|X_n=x\big)=
\left\{
\begin{array}{ll}
\frac{1}{k}\Big(\frac{k-3}{k}\Big)^{t-1}&~\mbox{if}~t\in\nn, x\in\{2,\ldots,k-1\},|y-x|\leq 1,  \\ [8pt]
\frac{1}{k}\Big(\frac{k-2}{k}\Big)^{t-1}&~\mbox{if}~t\in\nn, x\in\{1,k\},|y-x|\leq 1,\\ [8pt]
0&~\mbox{\rm otherwise}.
\end{array}
\right.
\feq
The rest of the proof is similar to the one given in Section~\ref{mac3}:
the Markov renewal process $\big(X_n,T(n)\big)_{n\in\nn}$ plays the role similar to that of $(u_n,\Lambda_n)_{n\in\nn},$
we first proof a CLT for the additive process $T(n),$ and then transform it into a CLT for $\calj_k(n)$ using \eqref{tn}.
In particular, similarly to \eqref{apsi},
\beq
\varphi_k=\lim_{n\to \infty} \frac{\calj_k(n)}{n} =\lim_{n\to \infty} \frac{E(\calj_k(n))}{n}= \frac{3k-2}{k^2},
\feq
where in the last step we used the result in Theorem~\ref{Lstair-avg}-(iv).
\hfill\hfill\qed
\subsection{Proof of Theorem~\ref{thpole1}-(ii)}
\label{prf}
Using \eqref{new}, it is not hard to check that the function $L_k(x,0)=\sum_{n=1}^\infty f_{r,k}(n)x^n$ is meromorphic
and that $1/(k-2)$ is its smallest by absolute value pole.  The result in Theorem~\ref{thpole1}-(ii) follows from this observation \cite{analcombin}.
\par
Alternatively, one can use the following simple counting argument to prove the result. On one hand,
\beq
f_{r,k}(n)\leq \binom{n +r-1}{r-1}\cdot h_{0,k}(r) \cdot (k-2)^{n-r}.
\feq
The first term on the right-hand side of the inequality is the number of ways to locate a given $L$-staircase within a
word of length $n,$ the second is for the number of staircase words of length $r,$ and the third one is an upper bound for the number of ways
to fill the remaining $n-r$ positions.
On the other hand, considering exclusively $L$-staircase subsequences that contain letter $1$ only,
we obtain that
\beq
f_{r,k}(n)\geq\binom{n+r-1}{r-1}\cdot (k-2)^{n-r}.
\feq
The comparison of the lower and upper bounds concludes the proof. \hfill\hfill\qed

\section*{Appendix. Proof of Proposition~\ref{prop}}
(i)~ Recall the random walk $(Y_n)_{n\in\nn},$ the absorption time $T_0,$ and the sub-stochastic transition matrix
$Q$ introduced in Remark~\ref{hbor}. By the monotone convergence theorem we have:
\beq
\lim_{n\to\infty}E\big(\xi_{w,k}(n)\big)=\sum_{m=1}^\infty P(Y_m=w,T_0>m)=\sum_{m=0}^\infty \sum_{j=1}^k\frac{1}{k}Q^m(j,w)=
\frac{1}{k}e(I-Q)^{-1}(w),
\feq
where $e$ is a $k$-vector with all entries equal to one. After a little algebra, the result then follows from Corollary~4.3 in \cite{isumms} (see also Theorem~2 in \cite{tan}). \hfill\hfill\qed
\\
$\mbox{}$
\\
(ii)
Recall the Markov chain $(X_n)_{n\in\nn}$ from Section~\ref{mac1}. Let $\big(P_{ij}\big)_{i,j\in[k]}$
denote transition kernel and $\mu=(\mu_1,\ldots, \mu_k)$ be the unique stationary distribution of the Markov chain.
That is, $\mu_i\geq 0$ for all $i\in [k],$ $\sum_{i\in [k]}\mu_i=1,$ and
$\mu_j=\sum_{i\in [k]}\mu_i P_{ij}$ for all $j\in [k].$ It is easy to verify that
\beq
\mu_1=\mu_k=\frac{2}{3k-2}\qquad \mbox{\rm and}\qquad \mu_2=\cdots=\mu_{k-1}=\frac{3}{3k-2}.
\feq
Since the Markov chain $X_n$ is irreducible and aperiodic, the law of large numbers (ergodic theorem) implies that
\beqn
\label{freq}
\lim_{n\to\infty}\frac{1}{n}\sum_{i=1}^n \one{X_i=w}=\mu_w\qquad \forall\,w\in [k],
\feqn
with probability one, regardless of the initial distribution of the Markov chain. Here $\odin_A$ denotes the indicator of the event $A,$
that is its value is one if the event occurs and is zero otherwise.  To complete the proof,
we note that $\eta_{w,k}(n)=\sum_{i=1}^{\calj_k(n)}\one{X_i=w}$ and pass in \eqref{freq} from the regular limit to the limit along the (random, but diverging with probability one) subsequence $\calj_k(n),$ $n\in\nn.$
\hfill\hfill\qed
\nocite{*}



\begin{thebibliography}{100}

\bibitem{alili}
S.~Alili, \emph{Asymptotic behaviour for random walks in random environments}, J.
Appl. Prob. \textbf{36} (1999), 334--349.
\filbreak 	

\bibitem{agut}
G.~Alsmeyer and A.~Gut,
\emph{Limit theorems for stopped functionals of Markov renewal processes},
Ann. Inst. Statist. Math. \textbf{51} (1999), 369--382.
\filbreak

\bibitem{renewal5}
V.~S.~Barbu and N.~Limnios,
\emph{Semi-Markov Chains and Hidden Semi-Markov Models Toward Applications. Their Use in Reliability and DNA Analysis},
Lecture Notes in Statistics, Vol. 191, Springer, 2008.
\filbreak

\bibitem{Bbook}
M.~B\'{o}na,
\emph{Combinatorics of Permutations},
Chapman \& Hall/CRC, Boca Raton, 2004.
\filbreak

\bibitem{ldpr}
J.~Bucklew,
\emph{The blind simulation problem and regenerative processes},
IEEE Trans. Inform. Theory \textbf{44} (1998), 2877--2891.
\filbreak

\bibitem{recur}
R.~G.~Buschman,
\emph{Fibonacci numbers, Chebyshev polynomials, generalizations and difference equations},
Fibonacci Quart. \textbf{1} (1963), 1--7.
\filbreak

\bibitem{ejc3}
A.~Carmona, A.~M.~Encinas, and M.~Mitjana,
\emph{Discrete elliptic operators and their Green operators},
Linear Algebra Appl. \textbf{442} (2014), 115--134.
\filbreak 
 
\bibitem{xiac}
X.~Chen,
\emph{Limit Theorems for Functionals of Ergodic Markov Chains with General State Space},
Mem. Amer. Math. Soc., Vol. 139, 1999.
\filbreak

\bibitem{renewal}
E.~\c{C}inlar,
\emph{Markov renewal theory: a survey}, Management Sci. \textbf{21} (1974/75), 727--752.
\filbreak

\bibitem{rwp}
P.~Coolen-Schrijner and E.~A.~van Doorn,
\emph{Analysis of random walks using orthogonal polynomials},
J. Comput. Appl. Math. \textbf{99} (1998), 387--399.
\filbreak

\bibitem{ebook}
T.~M.~Cover and J.~A.~Thomas,
\emph{Elements of Information Theory} (2nd ed.),
Wiley-Interscience, 2006.
\filbreak
	
\bibitem{dette}
H.~Dette, 
\emph{First return probabilities of birth and death chains and associated orthogonal polynomials},
Proc. Amer. Math. Soc. \textbf{129} (2001), 1805--1815.
\filbreak

\bibitem{ejc1}
A.~M.~Encinas and M.~J.~Jim\'{e}nez,
\emph{Explicit inverse of a tridiagonal $(p,r)$-Toeplitz matrix},
Linear Algebra Appl. \textbf{542} (2018), 402--421.
\filbreak

\bibitem{ejc}
A.~M.~Encinas and M.~J.~Jim\'{e}nez,
\emph{Explicit inverse of nonsingular Jacobi matrices},
Discrete Appl. Math. \textbf{263} (2019), 130--139.
\filbreak

\bibitem{ejc5}
A.~M.~Encinas and M.~J.~Jim\'{e}nez,
\emph{Boundary value problems for second order linear difference equations: application to the 
computation of the inverse of generalized Jacobi matrices}, 
Rev. R. Acad. Cienc. Exactas Fís. Nat. Ser. A Mat. RACSAM, in press.
\filbreak

\bibitem{analcombin}
P.~Flajolet and R.~Sedgewick,
\emph{Analytic Combinatorics},
Cambridge University Press, 2008.
\filbreak

\bibitem{isumms}
C.~M.~da~Fonseca and J.~Petronilho,
\emph{Explicit inverses of some tridiagonal matrices},
Linear Algebra Appl. \textbf{325} (2001), 7--21.
\filbreak

\bibitem{hargem}
F.~A.~Gr\"{u}nbaum,
\emph{Random walks and orthogonal polynomials: some challenges},
In M.~Pinsky and B.~Birnir (Eds.),
\emph{Probability, Geometry and Integrable systems},
Math. Sci. Res. Inst. Publ., Vol. 55, Cambridge Univ. Press, 2008, 241--260.

\bibitem{clt7}
O.~H\"{a}ggstr\"{o}m and J.~S.~Rosenthal,
\emph{On variance conditions for Markov chain CLTs},
Electron. Comm. Probab. \textbf{12} (2007), 454--464.
\filbreak
	
\bibitem{HM}
S.~Heubach and T.~Mansour,
\emph{Combinatorics of Compositions and Words},
Chapman and Hall/CRC, 2009.
\filbreak

\bibitem{houdre}
C.~Houdr\'{e} and R.~Restrepo,
\emph{A probabilistic approach to the asymptotics of the length of the longest alternating subsequence},
Electron. J. Combin. \textbf{17} (2010), paper no. 168.
\filbreak

\bibitem{inverse3}
G.~Y.~Hu and R.~F.~O'Connell,
\emph{Analytical inversion of symmetric tridiagonal matrices},
J. Phys. A \textbf{29} (1996), 1511--1513.
\filbreak

\bibitem{clt}
G.~L.~Jones,
\emph{On the Markov chain central limit theorem},
Probab. Surveys \textbf{1} (2004), 299--320.
\filbreak

\bibitem{kmg}
S.~Karlin and J.~McGregor,
\emph{The classification of birth and death processes},
Trans. Amer. Math. Soc. \textbf{86} (1957), 366--400.
\filbreak

\bibitem{Kbook}
S.~Kitaev,
\emph{Patterns in Permutations and Words}, Springer, 2011.
\filbreak	
	
\bibitem{kitaev}
S.~Kitaev and J.~Remmel,
\emph{$(a, b)$-rectangle patterns in permutations and words},
Discrete Appl. Math. \textbf{186} (2015), 128--146.
\filbreak
		
\bibitem{mansour3}
A.~Knopfmacher, T.~Mansour, and A. Munagi,
\emph{Smooth compositions and smooth words},
Pure Math. Appl. (PU.M.A.) \textbf{11} (2011), 209--226.
\filbreak
		
\bibitem{mansour2}
A.~Knopfmacher, T.~Mansour, A.~Munagi, and H.~Prodinger,
\emph{Staircase words and Chebyshev polynomials},
Appl. Anal. Discrete Math. \textbf{4} (2010), 81--95.
\filbreak

\bibitem{spectral}
W.~Ledermann and G.~E.~H.~Reuter,
\emph{Spectral theory for the differential equations of simple birth and death processes},
Philos. Trans. Roy. Soc. London. Ser. A. \textbf{246} (1954), 321--369.
\filbreak
		
\bibitem{mansour1}
T.~Mansour,
\emph{Smooth partitions and Chebyshev polynomials},
Bull. Lond. Math. Soc. \textbf{41} (2010), 961--970.
\filbreak

\bibitem{depot}
T.~Mansour, R.~Rastegar, and A.~Roitershtein,
\emph{On ballistic deposition process on a strip}, preprint, available at \url{https://arxiv.org/abs/1903.12548}.
\filbreak

\bibitem{bmc-book}
S.~Meyn and R.~L.~Tweedie,
\emph{Markov Chains and Stochastic Stability}, 2nd. ed.,
Cambridge University Press, 2009.
\filbreak

\bibitem{aaa}
A.~Nalli and H.~Civciv, 
\emph{A generalization of tridiagonal matrix determinants, Fibonacci and Lucas numbers}, 
Chaos Solitons Fractals \textbf{40} (2009), 355--361.
\filbreak

\bibitem{nash}
P.~J.~Nash,
\emph{Chebyshev polynomials and quadratic path integrals}, J. Math. Phys. \textbf{27} (1986), 2963.
\filbreak	

\bibitem{sinverse1}
F.~Qi, V.~\v{C}er\v{n}anov\'{a}, and Y.~S.~Semenov,
\emph{Some tridiagonal determinants related to central Delannoy numbers, the Chebyshev polynomials, and the Fibonacci polynomials},
Politehn. Univ. Bucharest Sci. Bull. Ser. A Appl. Math. Phys. \textbf{81} (2019), 123--136.
\filbreak

\bibitem{chebyr}
T.~J.~Rivlin,
\emph{Chebyshev Polynomials: From Approximation Theory to Algebra and Number Theory},
Wiley-Interscience, 1990.
\filbreak

\bibitem{silva}
J.~F.~Silva,
\emph{Shannon entropy estimation in $\infty$-alphabets from convergence results: studying plug-in estimators},
Entropy \textbf{20} (2018), paper no. 397.
\filbreak

\bibitem{rwb}
F.~Spitzer,
\emph{Principles of Random Walk}, 2nd. ed.,
Graduate Texts in Mathematics, Vol. 34,
Springer, 2001.
\filbreak

\bibitem{clt5}
D.~J.~Spitzner and T.~R.~Boucher,
\emph{Asymptotic variance of functionals of discrete time Markov chains via the Drazin inverse},
Electron. Comm. Probab. \textbf{12} (2007), 120--133.
\filbreak

\bibitem{tan}
L.~S.~L.~Tan,
\emph{Explicit inverse of tridiagonal matrix with applications in autoregressive modeling},
IMA J. Appl. Math., in press.
\filbreak

\bibitem{notes}
O.~Zeitouni,
\emph{Random Walks in Random Environment}, XXXI Summer School in Probability, (St. Flour, 2001).
Lecture Notes in Math., Vol. 1837, Springer, 2004, pp.~193--312.
\filbreak
		
\end{thebibliography}
\end{document}